\newcolumntype{M}[1]{>{\centering\arraybackslash}m{#1}}
\title[Identifiability of linear compartmental models]{Identifiability of linear compartmental models: \\ the impact of removing leaks and edges}
\author[P.~Chan]{Patrick Chan$^1$}
\address{$^1$Loyola University Chicago}
\author[K.~Johnston]{Katherine Johnston$^2$}
\address{$^2$Lafayette College}
\author[A.~Shiu]{Anne Shiu$^3$}
\address{$^3$Texas A\&M University}
\author[A.~Sobieska]{Aleksandra Sobieska$^4$}
\address{$^4$University of Wisconsin-Madison}
\author[C.~Spinner]{Clare Spinner$^5$}
\address{$^5$University of Portland}
\date{June 21,  2021}
\newtheorem{theorem}{Theorem}[section]
\newtheorem{prop}[theorem]{Proposition}
\newtheorem{proposition}[theorem]{Proposition}
\newtheorem{lemma}[theorem]{Lemma}
\newtheorem{conjecture}[theorem]{Conjecture}
\theoremstyle{definition}
\newtheorem{definition}[theorem]{Definition}
\newtheorem{example}[theorem]{Example}
\newtheorem{remark}[theorem]{Remark}
\newtheorem{question}[theorem]{Question}
\newtheorem{assumption}[theorem]{Assumption}
\begin{document}

\maketitle
\renewenvironment{abstract}
 {\quotation\small\noindent\rule{\linewidth}{.5pt}\par\smallskip
  {\centering\bfseries\abstractname\par}\medskip}
 {\par\noindent\rule{\linewidth}{.5pt}\endquotation}

\begin{abstract}
A mathematical model is identifiable if its  parameters can be recovered from data. Here, we focus on a particular class of model, linear compartmental models, which are used to represent the transfer of substances in a system. 
We analyze what happens to identifiability when operations are performed on a model, specifically, adding or deleting a leak or an edge. 
We first consider the conjecture of Gross {\em et al.}\ 
that states that removing a leak from an identifiable model yields a model that is again identifiable.  We prove a special case of this conjecture, and also show that the conjecture is equivalent to asserting that leak terms do not divide the so-called singular-locus equation.  
As for edge terms that do divide this equation, we conjecture that removing any one of these edges makes the model become unidentifiable, and then prove a case of this somewhat surprising conjecture.  
\end{abstract}

\section{Introduction} \label{sec:intro}
A model is \textit{generically structurally identifiable} 
if it has the following desirable property: from generic values of the 
inputs and initial conditions, the model parameters can be recovered 
from exact measurements of both inputs and outputs~\cite{Bellman}. 
In this paper, we consider the problem of assessing structural identifiability for a particular class of models, called \emph{linear compartmental models}. Linear compartmental models are used in a variety of fields, including in biological applications, to represent the transfer of substances in a system (e.g., \cite{dargenio1988simulation, hori2006role, mulholland1974analysis}). 
In the context of pharmacokinetics, a better understanding of such models might 
reveal better drug dosing regimes and more precise measurements of drug transfers within the body. 

We focus on the question of how identifiability is affected when operations -- such as adding or removing inputs, outputs, leaks, or edges -- are performed on a model~\cite{bortner-meshkat, joining, GHMS19One, vajda1984, vajdaetal}.   Gross {\em et al.}\ proved that 
under certain hypotheses, adding or removing a leak preserves identifiability~\cite{GHMS19One}. 
Subsequently, Gerberding {\em et al.}\ 
proved related results 
-- pertaining to adding or removing leaks and edges, and moving the output -- 
for several common families of models, specifically, catenary, cycle, and mammillary models~\cite{gerberding2019identifiability}.
In this work, we consider more general models, and again investigate the effect of adding or removing a leak or edge. 

Our two guiding questions, which were posed in earlier work~\cite{GHMS19One, GNA17Three}, are as follows. First, does deleting a leak from an identifiable model yield a model that is also identifiable? 
The second question pertains to the {\em singular-locus equation}, which defines where the
Jacobian matrix of a model's coefficient map (which is used for analyzing identifiability)  
drops rank.  
It is known that deleting an edge whose corresponding parameter does \underline{not} divide the singular-locus equation yields a submodel that is again identifiable~\cite{GNA17Three}.  The question here is, for an edge that \underline{does} divide the singular-locus equation, is the resulting submodel unidentifiable?  An answer would help clarify what information the singular-locus equation provides regarding the identifiability of  submodels~\cite{GNA17Three}. 

Both of the aforementioned questions 
are conjectured to have affirmative answers; see Conjecture~\ref{conj:rmv-leak}, which was originally posed in~\cite{GHMS19One}, and Conjecture~\ref{conj:dividing-edge} (see also the summary in Table~\ref{tab:summary-results}).  
We prove several results toward these conjectures. First, we show a special case of an equivalent conjecture on adding leaks, when unidentifiability arises from having too many parameters 
(Theorem~\ref{thm:add-leak}). We also show that the conjecture 
on deleting leaks (Conjecture~\ref{conj:rmv-leak})
is equivalent to asserting that leak terms do not divide the singular-locus equation (Theorem~\ref{thm:equivalence-of-conjectures}).  
Next, we prove that the conjecture
on dividing edges (Conjecture~\ref{conj:dividing-edge})
holds for models that satisfy certain combinatorial conditions (Theorem~\ref{thm:dividing-edge}).  
Finally, we investigate 
the types of edges that divide the singular-locus equation.

\begin{table}[ht]
\begin{tabular}{l c c}
    \hline
     Operation & Conjectured effect & Result \\
     \hline
     Remove leak & Identifiable & Theorem~\ref{thm:add-leak} \\
     Remove dividing edge & Unidentifiable & Theorem~\ref{thm:dividing-edge} \\
     \hline
\end{tabular}
    \caption{Summary of conjectures and results on how operations affect identifiability.  
    For a strongly connected linear compartmental model $\mathcal{\mathcal{M}}$ with at least one input, if $\widetilde{\mathcal{M}}$ is obtained from $\mathcal{\mathcal{M}}$ by the specified operation, and $\mathcal{\mathcal{M}}$ is identifiable, then $\widetilde{\mathcal{M}}$ is conjectured to be 
    as listed in the second column.  Partial results toward these conjectures are listed in the last column.  For related prior results, 
    see~\cite[Table~1]{gerberding2019identifiability} 
    and~\cite[Tables~1 and~2]{GHMS19One}. } \label{tab:summary-results}
\end{table}

Our investigations into identifiability harness the theory of input-output equations for linear compartmental models~\cite{bearup,  glad,  Ovchinnikov-Pogudin-Thompson}.
Our proofs therefore use linear-algebraic and combinatorial arguments.
We also rely on results of 
Bortner, Gross, Meshkat, Shiu, and Sullivant~\cite{bortner}, 
Bortner and Meshkat~\cite{bortner-meshkat}, and Mehskat, Sullivant, and Eisenberg~\cite{MSE-iden-results-several}.

The outline of this paper is as follows. In Section~\ref{sec: background}, we introduce linear compartmental models and identifiability.  We prove our results pertaining to leaks and dividing edges in Sections~\ref{sec:results-leak} and~\ref{sec:edge}, respectively.  We conclude with a discussion in Section~\ref{sec:discussion}.

\section{Background} \label{sec: background}
This section introduces linear compartmental models, identifiability, and the singular locus.
We closely follow the notation in~\cite{gerberding2019identifiability, GHMS19One, GNA17Three}.

\subsection{Linear compartmental models}

A \emph{linear compartmental model} $(G, In, Out, Leak)$ 
consists of a directed graph $G = (V,E)$ and subsets $In, Out, Leak \subseteq V$ of (respectively) {\em inputs, outputs,} and {\em leaks}. 
Each $i \in V$ is a {\em compartment.}  Input compartments receive an input stream, $u_i(t)$; and output compartments have an associated output measure, $y(t)$. 

\begin{assumption} \label{assm:at-least-1-output}
Throughout this work, we assume 
that every linear compartmental model has at least one output, because models without outputs are not identifiable.
\end{assumption}

In keeping with the literature, output compartments are indicated using this symbol: 
\begin{tikzpicture}
    \draw (0,0) circle (0.06);
    \draw (0.06,0.06) -- (.16,.19);
\end{tikzpicture}.
Input compartments are labeled ``in", and leaks are indicated by outgoing edges. For example, the linear compartmental model in Figure 1 has $In = \{1\}$, $Out = \{3\}$, and $Leak = \{1,2\}$. 

\begin{figure}[h]
    \centering
    \begin{tikzpicture}
    \draw (0,0) circle (.5) node [text=black] {$1$}; 
    \draw (2,0) circle (.5) node [text=black] {$2$}; 
    \draw (4,0) circle (.5) node [text=black] {$3$}; 
    \draw (5,.75) circle (0.1); 
    \draw (4.45,.25) -- (4.94,.7); 
    \draw (.5,.25) -- (1.5,.25) [-{Latex[length=2mm]}] node [label={[xshift=-.475cm, yshift=-.15cm]{\footnotesize $k_{21}$}}]{}; 
    \draw (.5,-.25) -- (1.5,-.25) [{Latex[length=2mm]}-] node [label={[xshift=-.475cm, yshift=-.75cm]{\footnotesize $k_{12}$}}]{}; 
    \draw (2.5,.25) -- (3.5,.25) [-{Latex[length=2mm]}] node [label={[xshift=-.475cm, yshift=-.15cm]{\footnotesize $k_{32}$}}]{}; 
    \draw (2.5,-.25) -- (3.5,-.25) [{Latex[length=2mm]}-] node [label={[xshift=-.475cm, yshift=-.75cm]{\footnotesize $k_{23}$}}]{}; 
    \draw (-0.5,.25) -- (-1, .75) [{Latex[length=2mm]}-]; 
    \node at (-1, .35) {in}; 
    \draw (0.2,.5) -- (.8,1.1) [-{Latex[length=2mm]}] node [label={[xshift=-.5cm, yshift=-.5cm]{\footnotesize $k_{01}$}}]{}; 
    \draw (2.2,.5) -- (2.8,1.1) [-{Latex[length=2mm]}] node [label={[xshift=-.5cm, yshift=-.5cm]{\footnotesize $k_{02}$}}]{}; 
\end{tikzpicture}
    \caption{A linear compartmental model.}
    \label{fig:1}
\end{figure}
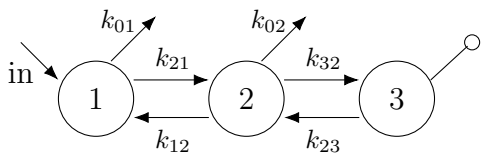

An edge $j \to i \in E$ represents flow from compartment $j$ to compartment $i$. We assign a \emph{flow parameter} $k_{ij}$ to each such edge. Leak compartments also have flow parameters, $k_{0 \ell}$, which are the rates of flow exiting the system from compartment $\ell$. 

Figure~\ref{fig:1} depicts a three-compartment catenary model. From a biological standpoint, this model could represent the injection and flow of a drug within the body. The input is the drug, and compartment 1 is the injection site. Compartments 2 and 3 represent other organs in the body where the drug travels. Edges between compartments 1, 2, and 3 indicate the transfer of the drug between organs, and the leaks, labeled by parameters $k_{01}$ and $k_{02}$, represent the transfer of drugs from the measurable system into immeasurable parts of the body, such as the bloodstream. The output is where the concentration of the drug is measured.

We now introduce some more definitions.

\begin{definition}
A directed graph is \emph{strongly connected} if there exists a path from each vertex to every other vertex. A linear compartmental model ($G, In, Out, Leak$) is \emph{strongly connected} if $G$ is strongly connected.
\end{definition}

\begin{definition}
For a linear compartmental model $(G, In, Out, Leak)$ with $n$ compartments, the \emph{compartmental matrix} is the $n \times n$ matrix $A$ given by the following: \par 
\begin{center}
$A_{ij} \coloneqq \begin{cases}
		-k_{0i} - \sum\limits_{p:(i \to p) \in E} k_{pi} & \text{if $i = j$ and $i \in Leak$} \\
		- \sum\limits_{p: (i \to p) \in E} k_{pi} & \text{if $i = j$ and $i \notin Leak$} \\
		k_{ij} & \text{if $j \to i$ is an edge of $G$} \\
		0 & \text{otherwise}
		\end{cases} $
\end{center}
\end{definition}

A linear compartmental model $(G, In, Out, Leak)$ defines the following system of ODEs with inputs $u_i(t)$ and outputs $y_i(t)$, where $x(t) = (x_1(t), x_2(t), \dots, x_n(t))$ is the vector of concentrations in the compartments at time $t$: 
\begin{align} \label{eq:ode}
    x'(t) &= Ax(t) + u(t), \\
    y_i(t) &= x_i(t) \quad \quad \text{for all $i \in Out$}~, \notag
\end{align}
where $u_i(t) \equiv 0$ for $i \notin In$.

\begin{example}
For the model in Figure 1, the ODEs are given by 

\[
  \begin{pmatrix}
  x'_1 \\
  x'_2 \\
  x'_3 \\
\end{pmatrix} 
=
 \begin{pmatrix}
 -k_{01}-k_{21} & k_{12} & 0 \\
 k_{21} & -k_{02}-k_{12}-k_{32} & k_{23} \\
 0 & k_{32} & -k_{23} \\
 \end{pmatrix}
\begin{pmatrix}
x_1 \\
x_2 \\
x_3 \\
\end{pmatrix}
+
\begin{pmatrix}
u_1 \\
0 \\
0 \\
\end{pmatrix}~,
\]
with output equation $y_3 = x_3$.
\end{example}

\subsection{Input-Output Equations}
\emph{Input-output equations} of a linear compartmental model are equations that hold along all solutions of the ODEs~\eqref{eq:ode}, and involve only the parameters~$k_{ij}$, input variables $u_i$, output variables $y_i$, and their derivatives. A general form of such equations was proven by Meshkat, Sullivant, and Eisenberg~\cite[Theorem~2]{MSE-iden-results-several} (see also~\cite[Remark~2.7]{GHMS19One}), as follows. 

\begin{prop} 
Consider a linear compartmental model $\mathcal M$ that is strongly connected and has at least one input.
Let $A$ denote the compartmental matrix, let $\partial$ be the differential operator $d/dt$, and let $(\partial I - A)_{ij}$ be the submatrix of $(\partial I - A)$ obtained by removing row $i$ and column~$j$. Then, for $j \in Out$, an input-output equation of $\mathcal M$ involving $y_j$ is:
\begin{align}
    \label{eq:i-o-equation}
    \det(\partial I - A) y_j ~=~ \sum_{i \in In}  (-1)^{i+j}  \det \left( \partial I-A \right)_{ij} u_i ~.
\end{align} 
\end{prop}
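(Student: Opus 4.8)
The plan is to treat the ODE system~\eqref{eq:ode} as a linear system over the ring of constant-coefficient differential operators and to solve for the output coordinate $x_j = y_j$ using the adjugate matrix. Writing $\partial = d/dt$, the first line of~\eqref{eq:ode} reads $\partial x = Ax + u$, which rearranges to
\[
  (\partial I - A)\, x ~=~ u .
\]
The crucial structural observation is that every entry of $A$ is a constant (one of the parameters $k_{ij}$, or a sum of such), so $\partial$ commutes with each entry and the entries of $\partial I - A$ all lie in the commutative polynomial ring $\mathbb{R}[\partial]$. This is what permits manipulating $\partial I - A$ by the ordinary rules of matrix algebra over a commutative ring.

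Next I would multiply this equation on the left by the adjugate matrix, using the identity $\operatorname{adj}(M)\, M = \det(M)\, I$, which holds over any commutative ring and hence over $\mathbb{R}[\partial]$. This gives
\[
  \det(\partial I - A)\, x ~=~ \operatorname{adj}(\partial I - A)\, u ,
\]
a genuine operator identity that requires no division, so invertibility of $\partial I - A$ is never needed. Reading off the $j$-th coordinate, the left-hand side is exactly $\det(\partial I - A)\, x_j$.

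For the right-hand side I would expand the $j$-th coordinate of $\operatorname{adj}(\partial I - A)\, u$ via $[\operatorname{adj}(M)]_{ji} = (-1)^{i+j} \det(M_{ij})$, where $M_{ij}$ denotes $M$ with row $i$ and column $j$ deleted. This yields $\sum_{i=1}^{n} (-1)^{i+j} \det(\partial I - A)_{ij}\, u_i$. Since $u_i \equiv 0$ for $i \notin In$, the sum collapses to the range $i \in In$; and since $j \in Out$ we may substitute $y_j = x_j$ on the left. The result is precisely~\eqref{eq:i-o-equation}.

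The only delicate point -- and the main thing to get right -- is justifying that the purely formal matrix computation over $\mathbb{R}[\partial]$ transfers faithfully to a true identity among the functions $x_i(t)$, $u_i(t)$, and their derivatives. This is exactly where constant coefficients are essential: they make $\mathbb{R}[\partial]$ commutative, so the adjugate identity holds verbatim and each scalar operator entry acts unambiguously on the solution functions. Granting this, the displayed equation holds identically along every solution of~\eqref{eq:ode}, as claimed.
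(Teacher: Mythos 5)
Your proof is correct. The paper does not actually prove this proposition---it is imported from Meshkat, Sullivant, and Eisenberg \cite{MSE-iden-results-several}---and your adjugate (division-free Cramer's rule) argument over the commutative operator ring $\mathbb{R}[\partial]$, applied coordinatewise to $(\partial I - A)x = u$ and then specialized using $u_i \equiv 0$ for $i \notin In$ and $y_j = x_j$, is essentially the standard derivation of that cited result.
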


We view the input-output equations~\eqref{eq:i-o-equation} as polynomials in the $y_j$'s and $u_i$'s and their derivatives.  The coefficients of the equations are therefore polynomials in the parameters ($k_{\ell m}$ for edges $m \to \ell$, and $k_{0 p}$ for leaks $p \in Leak$).  The coefficients of the input-output equations~\eqref{eq:i-o-equation} that are non-constant polynomials
%
are called the {\em nontrivial coefficients}.

\begin{definition} \label{def:coeff-map}
For a linear compartmental model $\mathcal M$, 
the \emph{coefficient map} $c: \mathbb{R}^{|E|+|Leak|} \to \mathbb{R}^m$ sends the vector of parameters 
$(k_{ij})$ to the vector of
all nontrivial 
coefficients of
all input-ouput equations~\eqref{eq:i-o-equation}. Here, $m$ denotes the number of such coefficients. 
\end{definition}

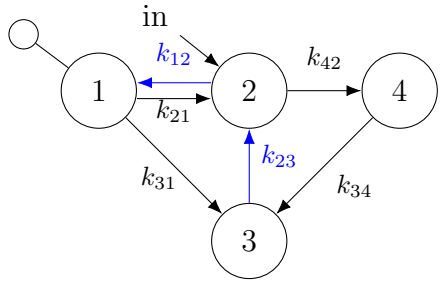
\begin{figure}[ht]
    \centering
    \begin{tikzpicture}
        \node[circle,draw, minimum size=1cm] (1) at  (0,0)  {1};
        \node[circle,draw, minimum size=1cm] (2) at  (2,0)  {2};
        \node[circle,draw, minimum size=1cm] (4) at  (4,0)  {4};
        \node[circle,draw, minimum size=1cm] (3) at  (2,-2)  {3};
        
        \node[circle, minimum size=0cm] (in) at  (.75, 1)  {in};
        \draw (in) -- (2) [-{Latex[length=2mm]}];
        \node[circle, draw, minimum size=.01cm] (out) at  (-1,.75)  {};
        \draw (out) -- (1);
        
        \draw (1) -- (2) [-{Latex[length=2mm]}, transform canvas={yshift=-3pt}] node [label={[xshift=-1cm, yshift=-.6cm]{\footnotesize $k_{21}$}}]{}; 
        \draw [blue] (2) -- (1) [-{Latex[length=2mm]}, transform canvas={yshift=3pt}] node [label={[xshift=1cm, yshift=-.05cm]{\color{blue} \footnotesize $k_{12}$}}]{}; 
        \draw (2) -- (4) [-{Latex[length=2mm]}] node [label={[xshift=-1cm, yshift=0cm]{\color{black} \footnotesize $k_{42}$}}]{}; 
        \draw (3) -- (2) [-{Latex[length=2mm]}, color = blue] node [label={[xshift=.4cm, yshift=-1.3cm]{\color{blue} \footnotesize $k_{23}$}}]{}; 
        \draw (1) -- (3) [-{Latex[length=2mm]}] node [label={[xshift=-1.2cm, yshift=.4cm]{\footnotesize $k_{31}$}}]{}; 
        \draw (4) -- (3) [-{Latex[length=2mm]}] node [label={[xshift=1.4cm, yshift=.3cm]{\footnotesize $k_{34}$}}]{}; 
    \end{tikzpicture}
    \caption{A linear compartmental model with $In = \{2\}$, $Out = \{1\}$, and $Leak=\emptyset$.  The dividing edges, $k_{12}$ and $k_{23}$,  are shown in blue (see Section~\ref{sec:singular-locus}).  }
    \label{fig:appearing1}
\end{figure}

\begin{example} \label{ex:running-example}
For the model in Figure~\ref{fig:appearing1}, the compartmental matrix is: 
\begin{align*}
    A ~=~
    \begin{pmatrix}
    -k_{21} - k_{31}    & k_{12}            & 0         & 0 \\    
    k_{21}              & -k_{12}-k_{42}    & k_{23}    & 0 \\
    k_{31}              & 0                 & -k_{23}   & k_{34} \\
    0                   & k_{42}            & 0         & -k_{34}
    \end{pmatrix}~.
\end{align*}
The input-output equation~\eqref{eq:i-o-equation}, namely, $\det(\partial I - A) y_1 = - \det \left( \partial I-A \right)_{21} u_2$, is as follows:
    \begin{align*}
    \det 
    \begin{pmatrix}
    \partial + k_{21} + k_{31}    & -k_{12}            & 0         & 0 \\    
    -k_{21}              & \partial + k_{12} + k_{42}    & -k_{23}    & 0 \\
    -k_{31}              & 0                 & \partial + k_{23}   & -k_{34} \\
    0                   & -k_{42}            & 0         & \partial + k_{34}
    \end{pmatrix} 
    y_1 
    ~=~
    \det 
    \begin{pmatrix}
    -k_{12}            & 0         & 0 \\    
     0                 & \partial + k_{23}   & -k_{34} \\
    -k_{42}            & 0         & \partial + k_{34}
    \end{pmatrix} 
    u_2~,
    \end{align*}
which expands to:
\begin{align} \label{eq:ex-i-o-eqn}
    y_1^{(4)}
    & 
    +
    (k_{12} + k_{21} + k_{23} + k_{31}+ k_{34} + k_{42}) y_1^{(3)}
    \\
    \notag
    & 
    + 
    (k_{12}k_{23} + k_{21}k_{23} + k_{12}k_{31} + k_{23}k_{31} + k_{12}k_{34} + k_{21}k_{34} + k_{23}k_{34} + k_{31}k_{34} 
    \\
    \notag
    & \quad \quad 
    + k_{21}k_{42} + k_{23}k_{42} + k_{31}k_{42} + k_{34}k_{42}) y_1^{(2)}
    \\
    \notag
    &
    + 
    (k_{12}k_{23}k_{34} + k_{21}k_{23}k_{34} + k_{12}k_{31}k_{34} + k_{23}k_{31}k_{34} + k_{21}k_{23}k_{42} 
    \\
    \notag
    & \quad \quad 
    + k_{23}k_{31}k_{42} + k_{21}k_{34}k_{42} + k_{31}k_{34}k_{42})y_1'
    \\
    \notag
    ~=~ &
    k_{12} u_2^{(2)} +
    k_{12}(k_{23}+k_{34})u_2' +
    k_{12}k_{23}k_{34} u_2~.
\end{align}
The nontrivial coefficients in the input-output equation~\eqref{eq:ex-i-o-eqn} form the coefficient map $c:\mathbb{R}^6 \to \mathbb{R}^6$; for instance, $c_1=k_{12} + k_{21} + k_{23} + k_{31}+ k_{34} + k_{42}$.
\end{example}

The following result, 
which is due to Bortner {\em et al.}~\cite[Corollary~3.4]{bortner}, addresses the size (the value of $m$) of the coefficient map $c:~\mathbb{R}^{|E|+|Leak|}~\to~\mathbb{R}^m$ for models with only one input and one output.

\begin{proposition}[Number of coefficients~\cite{bortner}] \label{prop:number-of-coefficients}
Consider a strongly connected linear compartmental model $ (G, In, Out, Leak)$ with $|In| = |Out| = 1$. Let $n$ be the number of compartments, and let $L$ be the length of the shortest (directed) path in $G$ from the input compartment to the output compartment. Then, in the input-output equation \eqref{eq:i-o-equation}, the number of 
nontrivial
coefficients on the left-hand and right-hand sides are as follows: \medskip \newline 
\begin{minipage}[t]{0.5 \textwidth}
$\#$ on LHS = $\begin{cases}
$n$ & \text{if $Leak \neq \emptyset$} \\
$n-1$ & \text{if $Leak = \emptyset$}\\
\end{cases}$
\end{minipage}
\begin{minipage}[t]{.5\textwidth}
$\#$ on RHS = $\begin{cases}
$n-1$ & \text{if $In = Out$}\\
$n-L$ & \text{if $In \neq Out$}\\
\end{cases}$
\end{minipage}
\end{proposition}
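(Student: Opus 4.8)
The plan is to reduce everything to the single input--output equation \eqref{eq:i-o-equation} in the case $In = \{a\}$, $Out = \{b\}$, which reads $\det(\partial I - A)\, y_b = (-1)^{a+b}\det((\partial I - A)_{ab})\, u_a$, and then to read off how many of the coefficients on each side are nonconstant polynomials in the parameters. Throughout I would exploit the fact that $-A$ is, apart from its leak entries, a graph Laplacian: its off-diagonal entries are $-k_{ij}\le 0$ and its $i$-th column sum equals $k_{0i}$. Consequently, by the (all-minors) Matrix-Tree Theorem, every principal minor of $-A$ expands as a sum, with $+1$ coefficients, of monomials indexed by spanning forests of $G$ rooted at the deleted vertices and at the leaks, with no cancellation. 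This sign-definiteness, together with strong connectivity (which guarantees that the relevant forests and paths exist), is the tool I would use to certify that a given coefficient is genuinely a nonzero polynomial.

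For the left-hand side I would write $\det(\partial I - A) = \sum_{k=0}^{n}\Sigma_k\, \partial^{\,n-k}$, where $\Sigma_k$ is the sum of the $k\times k$ principal minors of $-A$ and $\Sigma_0 = 1$ is the trivial leading coefficient. By the Matrix-Tree expansion each $\Sigma_k$ is homogeneous of degree $k$ with nonnegative coefficients, so it is a nontrivial coefficient precisely when it is nonzero as a polynomial. For $1\le k\le n-1$, strong connectivity supplies a spanning forest rooted at any chosen set of $k$ vertices, so $\Sigma_k\neq 0$. For $k=n$ we have $\Sigma_n=\det(-A)$; the column-sum computation shows the all-ones vector is a left null vector exactly when $Leak=\emptyset$, so $\det(-A)\equiv 0$ iff $Leak=\emptyset$, and otherwise $\det(-A)$ counts spanning forests rooted at the leaks, which exist by strong connectivity. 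This yields $n$ nontrivial coefficients (those of $\partial^{n-1},\dots,\partial^{0}$) when $Leak\neq\emptyset$ and $n-1$ (those of $\partial^{n-1},\dots,\partial^{1}$) when $Leak=\emptyset$.

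For the right-hand side I would expand the minor $\det((\partial I - A)_{ab})$ over permutations and interpret each term graph-theoretically: deleting row $a$ and column $b$ forces every surviving term to consist of one directed path from $a$ to $b$ of some length $m$, together with a permutation of the remaining $n-m-1$ vertices into directed cycles and fixed points. A fixed point contributes either $\partial$ or the degree-one part $-A_{rr}$, while cycles and the path contribute products of edge parameters. Tracking this shows the coefficient of $\partial^{s}$ is homogeneous of degree $n-1-s$, and that the top $\partial$-power comes from a shortest admissible path, giving degree $n-L-1$ when $In\neq Out$ (so there are $n-L$ coefficients, of $\partial^{\,n-L-1}$ down to $\partial^{0}$, each of degree $\ge L\ge 1$ and hence nonconstant) and degree $n-1$ when $In=Out$ (the deletion is a principal submatrix, monic of degree $n-1$, with a trivial leading coefficient and $n-1$ lower ones, whose constant term is nonzero even with no leak since $a$ acts as a root). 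As on the left, I would certify nonvanishing of each intermediate coefficient via the Matrix-Tree sign-definiteness, using strong connectivity to produce, for each $s$ in range, a shortest $a\to b$ path plus an out-edge at each remaining vertex.

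The main obstacle is precisely this nonvanishing: the raw permutation expansion of a minor has terms of mixed sign, so a priori some coefficient of an intermediate power $\partial^{s}$ could collapse to a constant (or to zero) through cancellation, and this must be excluded uniformly. The Matrix-Tree / Coates--Mason interpretation removes the danger by exhibiting each coefficient as a manifestly nonnegative sum over combinatorial structures; the remaining effort is the bookkeeping that matches a permutation realizing a path of length $m$ with the correct $\partial$-degree $n-m-1$, and the verification that the shortest path length $L$ governs the leading power. I would also emphasize the asymmetry between the two RHS cases: when $In=Out$ the leading coefficient is the trivial monic term, costing one coefficient, whereas when $In\neq Out$ the leading coefficient is the nonconstant sum over shortest paths, which is exactly why the counts are $n-1$ versus $n-L$.
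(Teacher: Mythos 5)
The paper does not actually prove this proposition; it is imported verbatim from Bortner {\em et al.}~\cite{bortner}, so there is no internal argument to compare against. Your proposal is, in substance, a correct reconstruction of the proof in that reference: both rest on viewing $-A$ as a directed-graph Laplacian (after adjoining a virtual environment vertex absorbing the leaks), so that the coefficients of $\det(\partial I - A)$ are forest-generating polynomials with nonnegative coefficients, while $(-1)^{a+b}\det\left((\partial I - A)_{ab}\right)$ expands as a cancellation-free sum over pairs consisting of a directed path from input to output and a forest on the complementary vertices; strong connectivity then certifies nonvanishing of exactly the claimed coefficients, and homogeneity of degree at least $1$ makes them nonconstant. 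Two points to tighten. First, an indexing slip: the coefficient of $\partial^{n-k}$ is the sum of $k \times k$ principal minors of $-A$, and these count forests rooted at the $n-k$ \emph{deleted} vertices (together with the environment), not ``at any chosen set of $k$ vertices''; the existence argument via strong connectivity is unaffected, but as written the roles of $k$ and $n-k$ are swapped. Second, the crux you yourself flag --- that the permutation signs, the sign $(-1)^{a+b}$, and the signs of the off-diagonal entries $-k_{ij}$ conspire so that every path-plus-cycles term in the off-diagonal minor appears with positive sign --- is invoked via the Coates--Mason/all-minors matrix-tree formalism rather than carried out; that is a legitimate citation (and is exactly what \cite{bortner} establishes), but a self-contained proof would need this bookkeeping done explicitly. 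A reassuring sanity check is Example~\ref{ex:running-example}, where $(-1)^{2+1}\det\left((\partial I - A)_{21}\right) = k_{12}(\partial + k_{23})(\partial + k_{34})$: the unique path $2 \to 1$ times the characteristic polynomial of the complementary block on $\{3,4\}$, in agreement with your decomposition.
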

%

In this work, we are interested in what happens to identifiability when leaks or edges are added or removed.  The following lemma analyzes the effect on the coefficients.

\begin{lemma}[Coefficients and adding a leak or edge] \label{lem:coeff-add-leak}
Let $\mathcal{M} = (G, In, Out, Leak)$ be a strongly connected linear compartmental model with $In=\{i\}$ and $Out=\{j\}$.
Let $n$ denote the number of compartments. 
Let $\widetilde{\mathcal{M}}$ denote the model obtained from $\mathcal M$ obtained by adding a leak or an edge, with corresponding parameter $k_{uv}$.  
Let $d_0, d_1, \dots, d_{2n+1}$ denote the coefficient of (respectively) 
    $y_j^{(0 )}, y_j^{(1 )}, \dots, y_j^{(n )},~ u_i^{(0 )}, u_i^{(1 )}, \dots, u_i^{(n-1 )} $ in the input-output equation~\eqref{eq:i-o-equation} for $\mathcal{M}$.
Let $\widetilde{d_0}, \widetilde{d_1}, \dots, \widetilde{d}_{2n+1}$ denote the 
corresponding coefficients (respectively) for $\widetilde{\mathcal{M}}$.
Then, for $\ell = 0,1,\dots, 2n+1$,
\[
    \widetilde{ d_{\ell} } ~=~ d_{\ell} + k_{uv} \cdot g_{\ell}~,
\]
for some polynomial $g_{\ell}$ (with integer coefficients) in the flow parameters of $\mathcal{M}$.
\end{lemma}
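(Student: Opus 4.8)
The plan is to track the single column of the compartmental matrix that is altered by the operation, and then exploit multilinearity of the determinant in that column, so that $k_{uv}$ can enter each coefficient only to the first power.

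First I would record exactly how the compartmental matrix changes. Write $A$ and $\widetilde A$ for the compartmental matrices of $\mathcal M$ and $\widetilde{\mathcal M}$; both are $n \times n$, since neither operation adds a compartment. If we add the edge $v \to u$ with parameter $k_{uv}$, then by the definition of the compartmental matrix only the entries $A_{uv}$ and $A_{vv}$ change, namely $\widetilde A_{uv} = A_{uv} + k_{uv}$ and $\widetilde A_{vv} = A_{vv} - k_{uv}$; and adding a leak at $v$ (so $u = 0$, with parameter $k_{0v}$) changes only the diagonal, $\widetilde A_{vv} = A_{vv} - k_{0v}$. In every case the change is confined to column $v$, and column $v$ of $\partial I - \widetilde A$ equals column $v$ of $\partial I - A$ plus $k_{uv}\, w$, where $w = e_v - e_u$ for an added edge and $w = e_v$ for an added leak (here $e_p$ denotes the $p$-th standard basis vector). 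Crucially, $w$ has entries in $\{-1,0,1\}$ and involves neither $\partial$ nor any parameter.

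Next I would apply multilinearity of the determinant in its $v$-th column. Since $\partial I - \widetilde A$ agrees with $\partial I - A$ outside column $v$, and that column is affine-linear in $k_{uv}$, linearity of the determinant in column $v$ forces
\[
\det(\partial I - \widetilde A) ~=~ \det(\partial I - A) + k_{uv} \det B,
\]
where $B$ is $\partial I - A$ with its $v$-th column replaced by $w$. Because $w$ and the remaining columns of $B$ involve only $\partial$ and the flow parameters of $\mathcal M$ with integer coefficients, $\det B$ is a polynomial in $\partial$ whose $\partial^{\ell}$-coefficient is an integer-coefficient polynomial $g_{\ell}$ in those parameters. Reading off the coefficient of $\partial^{\ell}$ (equivalently, of $y_j^{(\ell)}$) on both sides yields $\widetilde{d_{\ell}} = d_{\ell} + k_{uv}\, g_{\ell}$ for the left-hand-side coefficients.

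For the right-hand-side coefficients I would repeat the argument on the submatrix $(\partial I - A)_{ij}$, which is the one delicate point in the bookkeeping. If $v = j$, the altered column is deleted, so $(\partial I - \widetilde A)_{ij} = (\partial I - A)_{ij}$ and the claim holds with $g_{\ell} = 0$. If $v \neq j$, the altered column survives; deleting row $i$ may remove the $-e_u$ entry (precisely when $u = i$ in the edge case), but in all cases the surviving perturbation is still $k_{uv}$ times a constant integer vector supported on that one column. Multilinearity then gives $\det\!\big((\partial I - \widetilde A)_{ij}\big) = \det\!\big((\partial I - A)_{ij}\big) + k_{uv}\det B'$ for the corresponding constant-column replacement $B'$, and multiplying through by the fixed sign $(-1)^{i+j}$ (the same for both models, since $i$ and $j$ are unchanged) produces $\widetilde{d_{\ell}} = d_{\ell} + k_{uv}\, g_{\ell}$ for the coefficients of $u_i^{(\ell)}$ as well. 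The main obstacle is exactly this row/column-deletion casework; once it is settled, the integer-coefficient claim is immediate, since every replacement column $w$ or its restriction to the surviving rows has entries in $\{-1,0,1\}$.
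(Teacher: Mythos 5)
Your proof is correct, and it is more explicit than the paper's own argument, which is a two-line specialization: the authors note that $A = (\widetilde{A})|_{k_{uv}=0}$ and declare the result ``immediate'' from equation~\eqref{eq:i-o-equation}. That specialization gives $d_{\ell} = \widetilde{d}_{\ell}|_{k_{uv}=0}$, hence that $\widetilde{d}_{\ell} - d_{\ell}$ is divisible by $k_{uv}$; but on its own it does not show that the quotient $g_{\ell}$ is free of $k_{uv}$, i.e., that each coefficient is affine-linear in $k_{uv}$, which is exactly what the statement demands when it requires $g_{\ell}$ to be a polynomial in the flow parameters of $\mathcal{M}$ alone. Your route through multilinearity of the determinant in column $v$ --- observing that the perturbation is confined to that single column, both in $\det(\partial I - A)$ and in the surviving submatrix $(\partial I - A)_{ij}$ after the $v = j$ versus $v \neq j$ (and $u = i$) casework --- is precisely the observation that supplies this linearity, and it also yields the integrality of $g_{\ell}$ for free. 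So the two proofs rest on the same underlying fact (adding a leak or edge changes the compartmental matrix only in column $v$, and setting $k_{uv}=0$ recovers $A$), but yours makes explicit the linearity that the paper uses silently here and then invokes again, without separate proof, in the proof of Theorem~\ref{thm:equivalence-of-conjectures} (where it is asserted that $k_{0 \ell}$ ``appears only linearly'' in $\widetilde{c}_r$). The trade-off: the paper's specialization argument is shorter and treats all coefficients uniformly with no case analysis, while yours is longer but delivers the stated conclusion in full, including the one delicate point (the row/column deletions in $(\partial I - A)_{ij}$) that the word ``immediate'' papers over.
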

\begin{proof}
Let $A$ and $\widetilde{A}$ denote the compartmental matrices of $\mathcal{M}$ and $\widetilde{\mathcal{M}}$, respectively.  Note that $A= ( \widetilde{A})|_{k_{uv}=0}$.  Now the result is immediate from equation~\eqref{eq:i-o-equation}.
\end{proof}

\subsection{Identifiability}
A linear compartmental model is \emph{structurally identifiable} if each of the parameters $k_{ij}$ can be recovered from data~\cite{Bellman}.  (In contrast, a model is \emph{practically identifiable} if the parameters can be recovered from data with noise.)  Our focus here is on structural identifiability.  More precisely, we are interested in
generic local identifiability, which 
refers to identifiability except possibly for a measure-zero set of parameter space
and also allows for 
recovery of the parameters up to a finite set. 
This concept, in the case of strongly connected models (and others as well), is captured by the following definition in terms of input-output equations~\cite[Corollary 3.2]{Ovchinnikov-Pogudin-Thompson}. 

\begin{definition} \label{def:identifiable}
Let $\mathcal{M} = (G, In, Out, Leak)$ be a strongly connected linear compartmental model with 
at least one input. 
Let $c : \mathbb{R}^{|E| + |Leak|} \to \mathbb{R}^m$ be the coefficient map derived from the input-output equations~\eqref{eq:i-o-equation}. Then, $\mathcal{M}$ is: 

\begin{enumerate}
    \item \emph{generically locally identifiable} if, outside a set of measure zero, every point in $\mathbb{R}^{|E|+|Leak|}$ has an open neighborhood $U$
    for which the restriction $c|_U : U \to \mathbb{R}^m$ is one-to-one; 
    \item \emph{unidentifiable} if $c$ is infinite-to-one.
\end{enumerate}
\end{definition}

Next, we recall the following useful criterion for (generic local) identifiability~\cite{MSE-iden-results-several}.

\begin{prop}[Meshkat, Sullivant, and Eisenberg \cite{MSE-iden-results-several}] \label{prop:MSE}
A linear compartmental model $(G, In, Out, Leak)$, with $G = (V,E)$, is generically locally identifiable if and only if the rank of the Jacobian matrix of its coefficient map, when evaluated at a generic point, is equal to $|E|+|Leak|$.
\end{prop}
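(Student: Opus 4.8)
The plan is to deduce this from the constant rank theorem together with the fact that $c$ is a polynomial map. Write $N := |E| + |Leak|$ for the number of parameters, so that $c \colon \mathbb{R}^N \to \mathbb{R}^m$. Because each coordinate of $c$ is a polynomial, every entry of the Jacobian $Dc$ is a polynomial in the parameters, and hence the set on which a fixed $k \times k$ minor is nonzero is Zariski-open. Consequently $Dc$ attains a maximal rank $r$ on a Zariski-dense open set $U_0 \subseteq \mathbb{R}^N$, and its complement (where the rank drops below $r$) is a proper algebraic subset, in particular of measure zero. On $U_0$ the rank is constant and equal to $r$. I will prove the two implications separately, each time restricting attention to $U_0$.

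For the ($\Leftarrow$) direction, suppose $r = N$. Then at every point of $U_0$ the Jacobian $Dc$ has full column rank, so $c$ is an immersion there; by the immersion (constant rank) theorem, each point of $U_0$ has a neighborhood on which $c$ is one-to-one. Since $U_0$ is dense and its complement has measure zero, this is exactly the condition in Definition~\ref{def:identifiable}(1), so $\mathcal M$ is generically locally identifiable.

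For the ($\Rightarrow$) direction I argue by contraposition: suppose $r < N$. On the open set $U_0$ the map $c$ has constant rank $r$, so the constant rank theorem provides, near each $p \in U_0$, local coordinates in which $c$ becomes the linear projection $(z_1,\dots,z_N) \mapsto (z_1,\dots,z_r,0,\dots,0)$. In these coordinates the fiber of $c$ through $p$ is an $(N-r)$-dimensional submanifold; since $N - r \ge 1$, this fiber meets every neighborhood of $p$ in infinitely many points. Hence $c$ fails to be one-to-one on any neighborhood of any point of the dense open set $U_0$, so $\mathcal M$ is not generically locally identifiable (indeed $c$ is infinite-to-one, so $\mathcal M$ is unidentifiable in the sense of Definition~\ref{def:identifiable}). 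Combining the two implications gives the stated equivalence.

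The technical heart is the converse: one must convert a generic rank deficiency of $Dc$ into genuinely positive-dimensional — hence infinite — fibers, and this is precisely what the constant rank theorem delivers once one knows the rank is locally constant on the dense open locus $U_0$. The only point requiring care is that the maximal-rank locus is open and dense, which guarantees that the measure-zero exceptional set allowed in the definition of identifiability cannot be used to avoid the non-injective behavior occurring throughout $U_0$.
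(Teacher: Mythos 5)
Your proof is correct. Note, however, that the paper itself offers no proof of this statement: it is quoted as background (Proposition~\ref{prop:MSE}) from Meshkat, Sullivant, and Eisenberg, so there is no internal argument to compare against. Your argument is the standard one underlying that cited result, and it is self-contained and sound: the locus $U_0$ where the Jacobian attains its maximal rank $r$ is Zariski-open, nonempty, hence Euclidean-dense with measure-zero complement; when $r = |E|+|Leak|$ the immersion theorem gives local injectivity at every point of $U_0$, which is exactly Definition~\ref{def:identifiable}(1); and when $r < |E|+|Leak|$ the constant rank theorem produces positive-dimensional local fibers through every point of $U_0$, so local injectivity fails on a full-measure set and cannot be rescued by the measure-zero exception allowed in the definition. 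The two points that genuinely require care --- that the rank is \emph{constant} (not merely maximal) on $U_0$, so the constant rank theorem applies, and that the set of points where injectivity fails has full measure and so cannot sit inside the exceptional set --- are both handled explicitly. The only caveat worth flagging is your parenthetical claim that rank deficiency makes $c$ infinite-to-one in the sense of Definition~\ref{def:identifiable}(2): what your argument directly shows is that every fiber through a point of $U_0$ is infinite, which suffices for the equivalence being proved but is stated slightly more strongly than needed; the main biconditional does not depend on it.
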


\begin{example}[Example~\ref{ex:running-example}, continued] \label{ex:running-example-2}
Continuing with the model in Figure~\ref{fig:appearing1}, the Jacobian matrix of the coefficient map (arising from the input-output equation~\eqref{eq:ex-i-o-eqn}) is: 

\[
    \hspace{-2.4cm}
    \small{
    \left( 
    \setlength{\tabcolsep}{0.65em}
    {\renewcommand{\arraystretch}{1.3}
    \begin{tabular}{M{2.5cm}|M{3cm}|M{3cm}|M{3cm}|M{3cm}|M{3cm}}
    1 & 1 & 1 & 1 & 1 & 1 \\
    \hline 
    $k_{23} + k_{31} + k_{34}$ & $k_{23} + k_{34} + k_{42}$ & 
    $k_{12} + k_{21} + k_{31} + k_{34} + k_{42}$ & $k_{12} + k_{23} + k_{34} + k_{42}$ & 
    $k_{12} + k_{21} + k_{23} + k_{31} + k_{42}$ & $k_{21} + k_{23} + k_{31} + k_{34}$\\
    \hline 
    $k_{23}k_{34} + k_{31}k_{34}$ & $k_{23}k_{34} + k_{23}k_{42} + k_{34}k_{42}$ & 
    $k_{12}k_{34} + k_{21}k_{34} + k_{31}k_{34} + k_{21}k_{42} + k_{31}k_{42}$ & $k_{12}k_{34} + k_{23}k_{34} + k_{23}k_{42} + k_{34}k_{42}$ & 
    $k_{12}k_{23} + k_{21}k_{23} + k_{12}k_{31} + k_{23}k_{31} + k_{21}k_{42} + k_{31}k_{42}$ & $k_{21}k_{23} + k_{23}k_{31} + k_{21}k_{34} + k_{31}k_{34}$\\
    \hline 
    1 & 0 & 0 & 0 & 0 & 0\\
    \hline 
    $k_{23}+k_{34}$ & 0 & $k_{12}$ & 0 & $k_{12}$ & 0 \\
    \hline 
    $k_{23}k_{34}$ & 0 & $k_{12}k_{34}$ & 0 & $k_{12}k_{23}$ & 0\\ 
    \end{tabular}
    }
    \right)
    }
\] 
The determinant of this matrix is the following nonzero polynomial (and so, for generic values of the $k_{ij}$, the matrix has rank 5):
\begin{align} \label{eq:ex-s-locus-eqn}
k_{12}^3 (k_{21} + k_{31} - k_{34} - k_{42}) (k_{23} - k_{34}) k_{23}~.
\end{align}
It therefore follows from Proposition~\ref{prop:MSE} that this model is generically locally identifiable.
\end{example}

An important open problem is how to discern identifiability directly from a model.  This question is challenging and subtle, as the models in 
Figures~\ref{fig:Mysterious1} and~\ref{fig:Mysterious2} show.  The two models are very similar -- the second model is obtained from the first by replacing the edge $k_{41}$ with $k_{31}$ -- and yet the first model is identifiable while the second is unidentifiable.

\begin{minipage}{0.5 \textwidth}
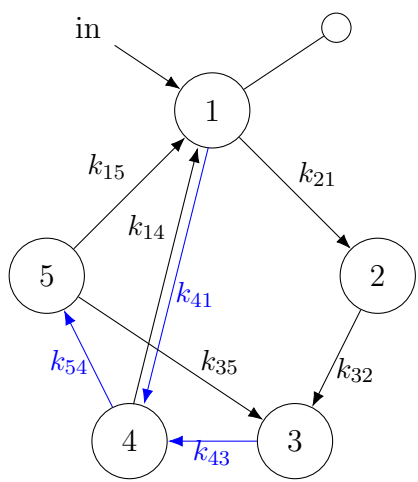
\begin{figure}[H]
    \centering
    \begin{tikzpicture}[scale=1.1]
        \node[circle,draw, minimum size=1cm] (5) at  (0,0) {5}; 
        \node[circle,draw, minimum size=1cm] (1) at  (2,2)  {1};
        \node[circle,draw, minimum size=1cm] (2) at  (4,0)  {2};
        \node[circle,draw, minimum size=1cm] (4) at  (1,-2)  {4};
        \node[circle,draw, minimum size=1cm] (3) at  (3,-2)  {3};
        \draw (1) -- (2) [-{Latex[length=2mm]}] node [label={[xshift= -.8cm, yshift= .9cm]{\small $k_{21}$}}]{};
        \draw (2) -- (3) [-{Latex[length=2mm]}] node [label={[xshift= .8cm, yshift= .5cm]{\small $k_{32}$}}]{};
        \draw (3) -- (4) [-{Latex[length=2mm]}, color = blue] node [label={[xshift= 1.1cm, yshift= -.6cm]{\color{blue} \small $k_{43}$}}]{};
        \draw (4) -- (5) [-{Latex[length=2mm]}, color = blue] node [label={[xshift= .3cm, yshift= -1.6cm]{\color{blue} \small $k_{54}$}}]{};
        \draw (5) -- (1) [-{Latex[length=2mm]}] node [label={[xshift= -1.4cm, yshift= -1.2cm]{\small $k_{15}$}}]{};
        \draw (1) -- (4) [-{Latex[length=2mm]},transform canvas={xshift=2pt}, color = blue] node [label={[xshift= .8cm, yshift= 1.5cm]{\color{blue} \small $k_{41}$}}]{};
        \draw (5) -- (3) [-{Latex[length=2mm]}] node [label={[xshift= -1cm, yshift= .6cm]{\small $k_{35}$}}]{};
        \draw (4) -- (1) [-{Latex[length=2mm]},transform canvas={xshift=-2pt}] node [label={[xshift= -.8cm, yshift= -2cm]{\small $k_{14}$}}]{};
        \node[circle, minimum size=0cm] (in) at  (.5,3)  {in};
        \draw (in) -- (1) [-{Latex[length=2mm]}];
        \node[circle, draw, minimum size=.01cm] (out) at  (3.5,3)  {};
        \draw (out) -- (1);
    \end{tikzpicture}
        \caption{Model with $In = Out = \{1\}$. The edges $k_{41}$, $k_{43}$, and $k_{54}$, in blue, are dividing edges (see Section~\ref{sec:singular-locus}).}
    \label{fig:Mysterious1}
\end{figure}
\end{minipage}
\begin{minipage}{0.5 \textwidth}
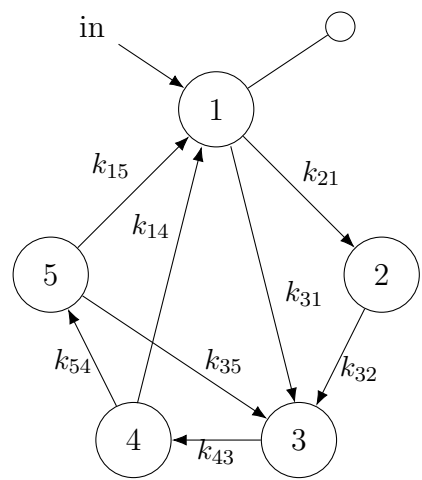
\begin{figure}[H]
    \centering
        \begin{tikzpicture}[scale=1.1]
        \node[circle,draw, minimum size=1cm] (5) at  (0,0) {5}; 
        \node[circle,draw, minimum size=1cm] (1) at  (2,2)  {1};
        \node[circle,draw, minimum size=1cm] (2) at  (4,0)  {2};
        \node[circle,draw, minimum size=1cm] (4) at  (1,-2)  {4};
        \node[circle,draw, minimum size=1cm] (3) at  (3,-2)  {3};
        \draw (1) -- (2) [-{Latex[length=2mm]}] node [label={[xshift= -.8cm, yshift= .9cm]{\small $k_{21}$}}]{};
        \draw (2) -- (3) [-{Latex[length=2mm]}] node [label={[xshift= .8cm, yshift= .5cm]{\small $k_{32}$}}]{};
        \draw (3) -- (4) [-{Latex[length=2mm]}] node [label={[xshift= 1.1cm, yshift= -.6cm]{\small $k_{43}$}}]{};
        \draw (4) -- (5) [-{Latex[length=2mm]}] node [label={[xshift= .3cm, yshift= -1.6cm]{\small $k_{54}$}}]{};
        \draw (5) -- (1) [-{Latex[length=2mm]}] node [label={[xshift= -1.4cm, yshift= -1.2cm]{\small $k_{15}$}}]{};
        \draw (1) -- (3) [-{Latex[length=2mm]},transform canvas={xshift=2pt}] node [label={[xshift= 0cm, yshift= 1.5cm]{\small $k_{31}$}}]{};
        \draw (5) -- (3) [-{Latex[length=2mm]}] node [label={[xshift= -1cm, yshift= .6cm]{\small $k_{35}$}}]{};
        \draw (4) -- (1) [-{Latex[length=2mm]},transform canvas={xshift=-2pt}] node [label={[xshift= -.8cm, yshift= -2cm]{\small $k_{14}$}}]{};
        \node[circle, minimum size=0cm] (in) at  (.5,3)  {in};
        \draw (in) -- (1) [-{Latex[length=2mm]}];
        \node[circle, draw, minimum size=.01cm] (out) at  (3.5,3)  {};
        \draw (out) -- (1);
    \end{tikzpicture}
    \caption{Model with $In = Out = \{1\}$}
    \label{fig:Mysterious2}
\end{figure}
\end{minipage}\medskip \newline

One way to be unidentifiable is to have too many edges or leaks (and thus too many parameters).  The following result in this direction is a special case of a result of Bortner and Meshkat~\cite[Theorem~6.1]{bortner-meshkat}.

\begin{prop}[Unidentifiable from too many leaks] \label{prop:bortner-meshkat-leaks}
Let $\mathcal{M} = (G, In, Out, Leak)$ be a strongly connected linear compartmental model with $|In|=|Out|=1$.  If $|Leak| > |In \cup Out|$, then $\mathcal M$ is unidentifiable.
\end{prop}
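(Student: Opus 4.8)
The plan is to use the Jacobian criterion of Proposition~\ref{prop:MSE}. Since $|Leak| > |In \cup Out| \ge 1$ forces $Leak \neq \emptyset$, it suffices to show that the coefficient map $c \colon \mathbb{R}^{|E|+|Leak|} \to \mathbb{R}^m$ has Jacobian of rank strictly less than $|E| + |Leak|$ at a generic point. One tempting shortcut is pure dimension counting: by Proposition~\ref{prop:number-of-coefficients}, when $Leak \ne \emptyset$ the target has dimension $m = 2n-1$ (if $In = Out$) or $m = 2n - L$ (if $In \neq Out$), so whenever $|E| + |Leak| > m$ the map is automatically infinite-to-one, hence unidentifiable. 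This disposes of the cases with very many parameters, but it is genuinely insufficient in general: already for a single $n$-cycle with two leaks and $In = Out$ one has $|E| + |Leak| = m = 2n-1$, and yet the model is unidentifiable. So the heart of the matter is a rank deficiency that counting cannot detect.

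To expose that deficiency, I would factor the coefficient map through the space of compartmental matrices. Let $\phi$ denote the map sending a matrix $A$ to the nontrivial coefficients of $\det(\partial I - A)$ and of the numerator minors $\det\big((\partial I - A)_{ij}\big)$, and set $d_\ell := -A_{\ell\ell}$, the total outflow rate at compartment $\ell$; then $c$ is $\phi$ composed with the linear embedding recording the $d_\ell$ and the off-diagonal edge parameters, where a leak at $\ell$ is precisely what makes $d_\ell$ a free coordinate. Writing $\Phi_{pq} := \partial \phi / \partial A_{pq}$ (a vector in coefficient space), a direct computation with~\eqref{eq:i-o-equation}, in the spirit of Lemma~\ref{lem:coeff-add-leak}, shows that the Jacobian column of $c$ for an edge $j \to i$ is $\Phi_{ij} - \Phi_{jj}$, while the column for a leak $k_{0\ell}$ is $-\Phi_{\ell\ell}$. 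Hence the generic rank of $c$ is the dimension of the span of $\{\Phi_{ij} - \Phi_{jj}\}$ together with $\{\Phi_{\ell\ell} : \ell \in Leak\}$. As the edge columns alone span at most $|E|$ dimensions, the proof reduces to the following claim: the leak directions $\{\Phi_{\ell\ell}\}_{\ell \in Leak}$ raise the rank by at most $|In \cup Out|$. Granting this, the generic rank is at most $|E| + |In \cup Out| < |E| + |Leak|$, and unidentifiability follows from Proposition~\ref{prop:MSE}.

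The main obstacle is exactly this claim, which says that modulo the edge span the vectors $\Phi_{\ell\ell}$ occupy a space of dimension at most $|In \cup Out|$ — intuitively, all but $|In\cup Out|$ of the leaks are redundant. Formal telescoping of edge columns along directed paths is not enough here; one must use the explicit shape of the diagonal derivatives, namely $\partial_{A_{\ell\ell}} \det(\partial I - A) = -\det\big((\partial I - A)_{\ell\ell}\big)$ and the analogous second-order minors governing the numerator, and combine them with strong connectivity to force the required linear dependence among the $\Phi_{\ell\ell}$. For $|In| = |Out| = 1$ I would attempt this directly, tracking how each leak enters the characteristic polynomial $\det(\partial I - A)$ and the single numerator minor $\det\big((\partial I - A)_{ij}\big)$; the cycle computation above, in which one recovers only the $n$ diagonal sums $d_1,\dots,d_n$ and a single cycle product no matter how many leaks are present, is the model case to generalize. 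This rank bound is precisely the phenomenon that the present statement inherits as the one-input, one-output specialization of the general leak theorem of Bortner and Meshkat~\cite{bortner-meshkat}.
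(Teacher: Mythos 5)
Your proposal is not a complete proof, and the gap is exactly where you admit it is. The chain-rule setup is fine: writing the Jacobian columns as $\Phi_{ij}-\Phi_{jj}$ for an edge $j\to i$ and $-\Phi_{\ell\ell}$ for a leak at $\ell$ is correct, and it correctly reduces the proposition to the claim that, modulo the span of the edge columns, the vectors $\{\Phi_{\ell\ell}:\ell\in Leak\}$ contribute at most $|In\cup Out|$ to the rank. But that claim \emph{is} the proposition --- it is the entire analytic content of the statement --- and you never establish it. Your third paragraph only describes what ``one must use'' and what you ``would attempt,'' and then closes by attributing the needed rank bound to \cite[Theorem~6.1]{bortner-meshkat}, which is the very result of which this proposition is a special case. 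As a self-contained argument this is circular: you have reformulated the statement and then cited it. (There is also a small arithmetic slip: for an $n$-cycle with two leaks and $In=Out$, one has $|E|+|Leak|=n+2$ and $m=2n-1$, which agree only when $n=3$; the illustrative point survives, but only for $n=3$.)

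For what it is worth, the paper itself offers no proof of this proposition either: it is stated as imported background, ``a special case of a result of Bortner and Meshkat \cite[Theorem~6.1]{bortner-meshkat},'' with no argument given. So the honest comparison is that the paper's ``proof'' is a citation, and your proposal, after a correct but inconclusive reduction, ends in the same citation. If you want an actual proof, the work lies in establishing your key claim --- for $|In|=|Out|=1$ that means showing the one or two vectors $\Phi_{\ell\ell}$ attached to the input/output compartments already span everything the remaining leak columns can contribute, using the minor identities $\partial_{A_{\ell\ell}}\det(\partial I-A)=\det\bigl((\partial I-A)_{\ell\ell}\bigr)$ together with strong connectedness --- and that argument is what lives in the cited reference, not in your text.
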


One guiding question of our work, which was posed in~\cite[Question~5.2]{GHMS19One}, is as follows.

\begin{question} \label{q:remove-leak}
Let $\widetilde{\mathcal{M}}$ be a linear compartmental model that is generically locally identifiable and has at least one leak.  
If one leak is removed, is the resulting model ${\mathcal{M}}$ always identifiable?
\end{question}

An affirmative answer to Question~\ref{q:remove-leak},
under certain hypotheses,
was conjectured by Gross, Harrington, Meshkat, and Shiu, as follows~\cite[Conjecture 4.5]{GHMS19One}.

\begin{conjecture}[Remove leak] \label{conj:rmv-leak}
Let $\widetilde{\mathcal{M}}$ be a strongly connected linear compartmental model that has at least one input and exactly one leak.  If $\widetilde{\mathcal{M}}$ is generically locally identifiable, then so is the model $\mathcal{M}$ obtained by removing the leak.
\end{conjecture}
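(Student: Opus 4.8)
The plan is to realize $\mathcal{M}$ as the specialization of $\widetilde{\mathcal{M}}$ at $k_{0p}=0$, where $k_{0p}$ is the unique leak parameter, and to compare the two Jacobians through this specialization. By Lemma~\ref{lem:coeff-add-leak}, every nontrivial coefficient of $\widetilde{\mathcal{M}}$ can be written $\widetilde{d}_\ell = d_\ell + k_{0p}\, g_\ell$, with $d_\ell$ and $g_\ell$ polynomials in the edge parameters only. Hence, after differentiating and then setting $k_{0p}=0$, the edge-parameter columns of $J_{\widetilde{c}}$ reduce exactly to the columns of $J_c$, while the one new column (the derivative with respect to $k_{0p}$) is the vector $(g_\ell)_\ell$. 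First I would pin down which coefficients are nontrivial for $\widetilde{\mathcal{M}}$ but trivial for $\mathcal{M}$. Because a strongly connected leak-free compartmental matrix has zero column sums and is therefore singular, the constant term $d_0 = (-1)^n\det(A)$ of the characteristic polynomial vanishes for $\mathcal{M}$, whereas $\widetilde{d}_0 = k_{0p}g_0$ does not; in the single-input, single-output case Proposition~\ref{prop:number-of-coefficients} certifies that this constant term is the only new coefficient.

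This gives, on the hyperplane $\{k_{0p}=0\}$, the block form
\[
    \left.J_{\widetilde{c}}\right|_{k_{0p}=0}
    ~=~
    \begin{pmatrix}
        J_c & \mathbf{g}\\
        \mathbf{0} & g_0
    \end{pmatrix},
\]
where the last row records the new coefficient $\widetilde{d}_0$ and therefore has zero entries in every edge column (the derivatives of the constant $d_0\equiv 0$ all vanish). The next step is to verify $g_0 \not\equiv 0$: since $k_{0p}g_0 = (-1)^n\det(\widetilde{A})$ and $\det(\widetilde{A})$ is, up to sign, $k_{0p}$ times a spanning-tree sum of products of edge weights (by the Matrix-Tree theorem), the quotient $g_0$ is a nonzero polynomial. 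Granting $g_0 \neq 0$, elementary row and column operations on the displayed matrix yield $\operatorname{rank}\left.J_{\widetilde{c}}\right|_{k_{0p}=0} = \operatorname{rank} J_c + 1$ at a generic point of the hyperplane.

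By Proposition~\ref{prop:MSE}, identifiability means the generic Jacobian rank equals the number of parameters. Because matrix rank is lower semicontinuous, the generic rank of $J_{\widetilde{c}}$ restricted to $\{k_{0p}=0\}$ equals its generic rank on all of $\mathbb{R}^{|E|+1}$ if and only if this hyperplane is not contained in the rank-drop locus, that is, if and only if $k_{0p}$ does not divide the singular-locus equation of $\widetilde{\mathcal{M}}$. Feeding in the block identity, $\mathcal{M}$ is identifiable exactly when $k_{0p}$ fails to divide that equation; this is the content of Theorem~\ref{thm:equivalence-of-conjectures}, and it recasts the conjecture as a pure non-divisibility claim. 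The same rank relation also delivers the easy converse for free: if $\mathcal{M}$ is identifiable then $\operatorname{rank} J_{\widetilde{c}} \geq \operatorname{rank} J_c + 1 = |E|+1$, so $\widetilde{\mathcal{M}}$ is identifiable as well.

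The hard part will be establishing the non-divisibility in general. In the square case, where $\mathcal{M}$ has as many coefficients as edges, the block identity specializes to $\det\left.J_{\widetilde{c}}\right|_{k_{0p}=0} = \pm\, g_0 \det J_c$, so $k_{0p}$ divides $\det J_{\widetilde{c}}$ precisely when $\det J_c \equiv 0$; the conjecture thus asserts that a nonzero $\det J_{\widetilde{c}}$ can never be accompanied by a vanishing $\det J_c$, and ruling this out seems to demand delicate combinatorial control over the Jacobian determinant. One regime I can dispatch unconditionally is unidentifiability forced by too many parameters: adjoining a leak raises the parameter count by one and the coefficient count by at most one, so a model with more edges than coefficients remains parameter-heavy after the leak is added and stays unidentifiable; contrapositively, this proves the conjecture whenever unidentifiability would be forced by parameter counting, which is the statement of Theorem~\ref{thm:add-leak}. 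Removing this counting hypothesis to reach the full non-divisibility statement is the central obstacle.
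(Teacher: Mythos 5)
You do not prove the statement, and you candidly say so in your final sentence --- but neither does the paper: this statement is a \emph{conjecture} there (still open), and the paper only establishes partial results toward it. Your partial progress matches the paper's almost exactly, and by the same method: your block-triangular specialization of the Jacobian at $k_{0p}=0$, with new row $(0,\dots,0,g_0)$ and resulting factorization $\det \widetilde J|_{k_{0p}=0} = \pm\, g_0 \det J$, is precisely equation~\eqref{eq:compare-two-jacobians} in the paper's proof of Theorem~\ref{thm:equivalence-of-conjectures}, which (under the hypotheses $|In|=|Out|=|Leak|=1$ and square-Jacobian) recasts Conjecture~\ref{conj:rmv-leak} as the non-divisibility Conjecture~\ref{conj:leaks-do-not-divide}; and your parameter-counting regime is exactly Theorem~\ref{thm:add-leak}. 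The differences are minor or are slight strengthenings: you prove $g_0\not\equiv 0$ via the Matrix-Tree theorem, where the paper argues in one line that $\widetilde c_r\neq 0$ together with $\widetilde c_r|_{k_{0\ell}=0}=0$ and linearity in $k_{0\ell}$ force $\partial \widetilde c_r/\partial k_{0\ell}\neq 0$; you recover the known converse (Remark~\ref{rem:converse-conjecture-remove-leak}, which the paper cites from prior work) from the same block identity; and your rank-semicontinuity phrasing extends the equivalence to non-square Jacobians (in the form: $\mathcal M$ is identifiable iff $k_{0p}$ fails to divide some maximal minor of $\widetilde J$), a case the paper's remark after Theorem~\ref{thm:equivalence-of-conjectures} handles only when a nonzero minor contains the constant-term row. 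One caution: your identification of the constant term as the only ``new'' coefficient invokes Proposition~\ref{prop:number-of-coefficients} and hence silently assumes $|In|=|Out|=1$, whereas the conjecture as stated allows any number of outputs; the paper's theorems make this restriction explicit, and yours should too.
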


Conjecture~\ref{conj:rmv-leak} holds in the following cases:
\begin{itemize}
    \item when $\widetilde{\mathcal{M}}$ has an input, output, and leak in a single compartment (and has no other inputs, outputs, or leaks)~\cite[Proposition~4.6]{GHMS19One}, and
    \item when $\widetilde{\mathcal{M}}$ is obtained from an ``identifiable cycle model''~\cite{MSE-iden-results-several} by removing all leaks except one~\cite[Proposition~3.4]{gerberding2019identifiability}.
\end{itemize}
The first case above is, in fact, a special case of the second~\cite[Corollary~4.2]{bortner-meshkat}.  Also, this second case includes some  models that are widely used in applications, including certain catenary, cycle, and mammillary models~\cite[Proposition~2.11]{GNA17Three}.

\begin{remark} \label{rem:converse-conjecture-remove-leak}
The converse to Conjecture~\ref{conj:rmv-leak} is true~\cite[Theorem 4.3]{GHMS19One}.  
\end{remark}

In the next section, we 
pose the contrapositive of Question~\ref{q:remove-leak}, conjecture an affirmative answer, and then prove one case of this conjecture
(Theorem~\ref{thm:add-leak}). 
In doing so, we resolve a case of (the contrapositive
of) Conjecture~\ref{conj:rmv-leak}.  We also prove that Conjecture~\ref{conj:rmv-leak} is equivalent to a new conjecture, which states that leak terms do not divide the singular-locus equation (see Theorem~\ref{thm:equivalence-of-conjectures}).  We turn now 
to
this topic of the singular locus.




\subsection{Singular Locus} \label{sec:singular-locus}
Here we recall the definition of singular locus introduced in~\cite{GNA17Three}.

\begin{definition}
Let $\mathcal{M} = (G, In, Out, Leak)$ be a 
strongly connected linear compartmental model 
that has at least one input and is
generically locally identifiable. 
Let $c$ be the coefficient map derived from the input-output equations~\eqref{eq:i-o-equation}. 
The \emph{singular locus} is the subset of the parameter space $\mathbb{R}^{|E|+|Leak|}$ where the Jacobian matrix of $c$ has rank strictly less than $|E| + |Leak|$. 
\end{definition}

Therefore, for identifiable linear compartmental models, the {singular locus} is defined by the set of all $(|E| + |Leak|) \times (|E|+|Leak|)$ minors of the Jacobian matrix of the coefficient map. For some models, this Jacobian matrix is square and hence there is only a single minor of interest, namely, the determinant.  In this case, the determinant of the Jacobian matrix is called the {\em singular-locus equation}.  (This equation is also defined in~\cite{GNA17Three} for some non-square cases, but here we mainly focus on the square case.)

We call an edge 
$j \to i \in E$ 
a {\em dividing edge} if its parameter $k_{ij}$ divides the singular-locus equation; we also call $k_{ij}$ itself a dividing edge.  
We do not know whether the dividing edges can be read directly from the model; this is an interesting future research direction. 

\begin{example}[Example~\ref{ex:running-example-2}, continued] \label{ex:running-example-3}
For the model in Figure~\ref{fig:appearing1}, the singular-locus equation was shown in equation~\eqref{eq:ex-s-locus-eqn}.  From this equation, we see that the dividing edges are $k_{12}$ and $k_{23}$.
\end{example}

Gross, Meshkat, and Shiu proved that removing non-dividing edges preserves identifiability, as follows~\cite[Theorem 3.1]{GNA17Three}.

\begin{prop}[Deleting non-dividing edges~\cite{GNA17Three}] \label{prop:delete-non-dividing-edge}
Let $\mathcal{M} = (G, In, Out, Leak)$ be a strongly connected linear compartmental model that is generically locally identifiable, with singular-locus equation $f$. 
Let $\widetilde{\mathcal{M}}$ be the model obtained from $\mathcal{M}$ by deleting a set of edges $\mathcal{I}$ of $G$. If $\widetilde{\mathcal{M}}$ is strongly connected, and 
every edge in $\mathcal I$ is non-dividing, then $\widetilde{\mathcal{M}}$ is generically locally identifiable.
\end{prop}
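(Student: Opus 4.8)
The plan is to translate everything into a statement about the generic rank of the Jacobian of the coefficient map and then exploit the factored form of the singular-locus equation. Write $N := |E| + |Leak|$ and let $J$ denote the Jacobian of the coefficient map $c$ of $\mathcal{M}$; by Proposition~\ref{prop:MSE}, identifiability of $\mathcal{M}$ says exactly that $J$ has rank $N$ at a generic point. Deleting an edge with parameter $k_{uv}$ amounts, by Lemma~\ref{lem:coeff-add-leak}, to setting $k_{uv}=0$ in every coefficient; hence the coefficient map of $\widetilde{\mathcal{M}}$ is obtained from $c$ by the substitution $\mathbf{k}_{\mathcal{I}}=0$ (and by discarding any coefficients that thereby become constant). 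Consequently the Jacobian $\widetilde{J}$ of $\widetilde{\mathcal{M}}$ is obtained from $J$ by deleting the columns indexed by the parameters in $\mathcal{I}$, restricting the remaining entries to $\{\mathbf{k}_{\mathcal{I}}=0\}$, and deleting the rows of the now-constant coefficients. Since those discarded rows are identically zero after the substitution (the gradient of a constant vanishes), they do not affect the rank, so I may work with $J'|_{\mathbf{k}_{\mathcal{I}}=0}$, where $J'$ is $J$ with the $\mathcal{I}$-columns removed. The goal is thus to show that $J'|_{\mathbf{k}_{\mathcal{I}}=0}$ has generic rank $N-|\mathcal{I}|$.

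First I would settle the case $\mathcal{I}=\{e\}$ of a single non-dividing edge, which is the clean geometric heart of the argument; following the paper I focus on the square case, where $f=\det J$. Here ``$k_e$ non-dividing'' means $k_e \nmid f$, i.e.\ $f|_{k_e=0}\not\equiv 0$. Hence at a generic point of the hyperplane $\{k_e=0\}$ we still have $\det J \neq 0$, so $J$ has rank $N$ there. Removing a single column from an $N\times N$ matrix of rank $N$ leaves rank exactly $N-1$; therefore $J'|_{k_e=0}$ has generic rank $N-1=N-|\mathcal{I}|$, and by the reduction above together with Proposition~\ref{prop:MSE}, $\widetilde{\mathcal{M}}$ is identifiable. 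The hypothesis that $\widetilde{\mathcal{M}}$ is strongly connected is needed only so that Definition~\ref{def:identifiable} and Proposition~\ref{prop:MSE} apply to it.

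For a general set $\mathcal{I}=\{e_1,\dots,e_r\}$ I would induct on $r$, deleting one edge at a time and reapplying the single-edge case. The engine of the induction is a factorization of the singular-locus equation under deletion: expanding $\det J$ along the column $\partial c/\partial k_{e_1}$ and using that exactly one coefficient $d_{i_0}$ becomes constant when $k_{e_1}=0$ (in the square case), one obtains $f_{\mathcal{M}}|_{k_{e_1}=0}=\pm\,(\partial d_{i_0}/\partial k_{e_1})\cdot f_{\mathcal{M}\setminus e_1}$, so that the singular-locus equation of the submodel controls that of $\mathcal{M}$ on $\{k_{e_1}=0\}$. To continue the induction I must know that $e_2,\dots,e_r$ remain non-dividing for $\mathcal{M}\setminus e_1$, and that the intermediate models can be taken strongly connected.

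The step I expect to be the main obstacle is exactly this propagation of the non-dividing property. Knowing $k_{e_j}\nmid f_{\mathcal{M}}$ for each $j$ does not, by itself, force $k_{e_j}\nmid f_{\mathcal{M}\setminus e_1}$: passing to the slice $\{k_{e_1}=0\}$ can create a spurious factor of $k_{e_j}$, the phenomenon behind an $f$ of the shape $k_{e_1}+k_{e_j}$, where no single variable divides $f$ yet the restriction is divisible. Ruling this out is where the combinatorial structure of the compartmental coefficients and strong connectivity must enter, either to guarantee that the cofactor $\partial d_{i_0}/\partial k_{e_1}$ absorbs any such factor, or to order the deletions so that every intermediate model is strongly connected and each edge stays non-dividing. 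Making this propagation rigorous --- equivalently, pinning down how the singular-locus equation factors through a chain of edge deletions --- is the crux; the remaining rank bookkeeping is routine given the single-edge case.
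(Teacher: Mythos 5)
First, a point of order: the paper you were given does not prove this proposition --- it is imported verbatim from~\cite{GNA17Three} --- so there is no in-paper proof to compare against, and your attempt must be judged on its own. Your reduction (the coefficient map of $\widetilde{\mathcal{M}}$ is the restriction of $c$ to the coordinate subspace $\{\mathbf{k}_{\mathcal{I}}=0\}$ with newly-constant coefficients dropped; the dropped rows have vanishing gradient there, so they are rank-irrelevant) is correct, and your single-edge, square-Jacobian argument is complete and is the natural one: $k_e \nmid f$ means $f|_{k_e=0}\not\equiv 0$, so $J$ has full column rank at generic points of the hyperplane, and deleting one column of a full-column-rank matrix leaves full column rank, whence Proposition~\ref{prop:MSE} applies.

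The genuine gap is that the proposition concerns a \emph{set} $\mathcal{I}$ of edges, and your induction does not close this case --- you say so yourself, but the problem is worse than ``routine bookkeeping left to do,'' for two concrete reasons. (a) Your induction engine, the identity $f_{\mathcal{M}}|_{k_{e_1}=0}=\pm\,(\partial d_{i_0}/\partial k_{e_1})\cdot f_{\mathcal{M}\setminus e_1}$, presupposes that exactly one coefficient becomes constant upon deleting $e_1$, i.e., that $\mathcal{M}\setminus e_1$ is again square-Jacobian. By Proposition~\ref{prop:number-of-coefficients}, the number of nontrivial coefficients changes only when the deletion increases the shortest input-to-output path length; when it does not, $\mathcal{M}\setminus e_1$ has one \emph{more} coefficient than parameters, its Jacobian is non-square, the identity does not even typecheck, and ``non-dividing'' (in this paper's square-focused sense) is undefined for the next induction step. (b) The propagation issue you flag is not a removable technicality: individual non-division genuinely fails to imply joint non-vanishing, even for actual models. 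The paper's own singular-locus equation~\eqref{eq:ex-s-locus-eqn}, $f=k_{12}^3(k_{21}+k_{31}-k_{34}-k_{42})(k_{23}-k_{34})k_{23}$, has non-dividing edges $k_{21},k_{31},k_{34},k_{42}$, yet setting all four to zero makes $f$ vanish identically. This does not contradict the proposition only because deleting those four edges destroys strong connectivity; so any complete proof must use strong connectivity of $\widetilde{\mathcal{M}}$ in an essential, combinatorial way --- directly contradicting your closing claim that strong connectivity is needed ``only so that Definition~\ref{def:identifiable} and Proposition~\ref{prop:MSE} apply.'' In summary: the $|\mathcal{I}|=1$ case is correctly proved, but the statement as given remains unproven, and the proposed route to it would need to be rebuilt, not merely completed.
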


Gross, Meshkat, and Shiu also asked whether a converse to Proposition~\ref{prop:delete-non-dividing-edge} holds (see~\cite[Question 3.4]{GNA17Three}), and here we conjecture an affirmative answer, as follows.

\begin{conjecture}[Remove dividing edge] \label{conj:dividing-edge}
Let $\mathcal{M}$ be a strongly connected linear compartmental model that is generically locally identifiable, with singular-locus equation $f$.  
If $j \to i$ is a dividing edge of $\mathcal M$, 
and the model $\mathcal{M}'$ 
obtained by deleting the edge $j \to i$
is strongly connected,
then $\mathcal{M}'$
is unidentifiable.
\end{conjecture}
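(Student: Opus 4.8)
The plan is to work entirely with Jacobian matrices and reduce the problem to a single rank computation, via Proposition~\ref{prop:MSE}. Write $N := |E| + |Leak|$ and assume we are in the square case, so that $f = \det J$, where $J$ is the $N \times N$ Jacobian of the coefficient map of $\mathcal{M}$. The goal is to show that the Jacobian $J'$ of $\mathcal{M}'$ has rank strictly less than $N-1$ at a generic point, which by Proposition~\ref{prop:MSE} is exactly the unidentifiability of $\mathcal{M}'$.

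First I would use Lemma~\ref{lem:coeff-add-leak} to relate the two Jacobians. Viewing $\mathcal{M}$ as obtained from $\mathcal{M}'$ by adding the edge $j \to i$ with parameter $k_{ij}$, the lemma gives $c_\ell = c'_\ell + k_{ij}\, g_\ell$, where the $c'_\ell$ are the coefficients of $\mathcal{M}'$ and the $g_\ell$ do not involve $k_{ij}$. Differentiating and setting $k_{ij} = 0$ shows that $J|_{k_{ij}=0} = [\,J' \mid g\,]$: the column $\partial c / \partial k_{ij}$ is the vector $g = (g_\ell)$, while every other column restricts to the corresponding column of $J'$. Hence $f|_{k_{ij}=0} = \det[\,J'\mid g\,]$, and the dividing hypothesis $k_{ij}\mid f$ is equivalent to the polynomial identity $\det[\,J'\mid g\,]\equiv 0$; that is, the columns of $[\,J'\mid g\,]$ satisfy a nontrivial linear dependence $\sum_p \mu_p\, \partial_p c' + \mu_0\, g = 0$ over the field of rational functions in the remaining parameters. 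This is the exact mirror of Proposition~\ref{prop:delete-non-dividing-edge}, since a non-dividing edge is precisely one for which $[\,J'\mid g\,]$ is nonsingular, forcing $J'$ to have full column rank.

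The crux is to upgrade this single dependence into a rank drop of $J'$ by itself, i.e.\ to show that the coefficient $\mu_0$ may be taken to vanish; equivalently that $g \notin \operatorname{colspan}(J')$, equivalently that every maximal minor of $J'$ vanishes identically. Expanding $\det[\,J'\mid g\,]$ along the $g$-column rewrites the identity as $\sum_\ell (-1)^\ell g_\ell M_\ell \equiv 0$, where $M_\ell$ is the maximal minor of $J'$ omitting row $\ell$; moreover, via the adjugate construction the $g$-component of a kernel vector of $[\,J'\mid g\,]$ equals, up to sign, one of these minors $M_\ell$. So the conjecture reduces to showing that this one relation among the $M_\ell$ forces them all to vanish. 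Here I would invoke the combinatorial hypotheses. The first mechanism I would try is a counting argument: if deleting $j \to i$ drops the number of nontrivial coefficients of $\mathcal{M}'$ below $N-1$ --- for instance by strictly lengthening every shortest directed input--output path, via Proposition~\ref{prop:number-of-coefficients} --- then $J'$ has fewer than $N-1$ rows and the rank drop is automatic. Otherwise, I would try to read off an explicit nonzero $\lambda$ with $J'\lambda = 0$, i.e.\ an infinitesimal symmetry of the coefficients of $\mathcal{M}'$, directly from the graph of $\mathcal{M}'$.

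The hard part will be exactly this last step: ruling out the benign scenario in which $g$ lies in the column span of a full-rank $J'$, for in that case $\mathcal{M}'$ would remain identifiable and the conjecture would fail outright. The single scalar relation $\sum_\ell (-1)^\ell g_\ell M_\ell \equiv 0$ does not on its own force every $M_\ell$ to vanish, so the proof must genuinely use the graph structure. I expect the decisive extra input to be the multiplicity with which $k_{ij}$ divides $f$ --- which records that the higher-order terms of $\det J$ in $k_{ij}$ vanish as well, yielding further relations among the minors of $J'$ and their derivatives --- together with control over the supports of the $g_\ell$ relative to the $M_\ell$, and I anticipate that the combinatorial conditions of the theorem are precisely what pin down $\mu_0 = 0$.
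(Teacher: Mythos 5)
The statement you are attempting is a \emph{conjecture}: the paper does not prove it, and it remains open. The only part of your proposal that actually closes is the counting mechanism, and that is precisely the paper's partial result, Theorem~\ref{thm:dividing-edge}: under the hypotheses $|In|=|Out|=1$, $Leak=\emptyset$, square-Jacobian, and the assumption that deleting the edge increases the shortest input--output path length by at least $2$, Proposition~\ref{prop:number-of-coefficients} forces $\mathcal{M}'$ to have fewer nontrivial coefficients than parameters, so unidentifiability follows from Proposition~\ref{prop:MSE}. Two caveats on your version of this step. First, your gloss ``strictly lengthening every shortest directed input--output path'' is not sufficient: writing $N:=|E|+|Leak|$, an increase of the path length by exactly $1$ leaves $\mathcal{M}'$ with $N-1$ coefficients against $N-1$ parameters, and no conclusion follows; this is why the paper requires an increase of at least $2$. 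Second, your whole setup assumes the square-Jacobian case, which the conjecture does not.

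For the general case your reduction is correct as far as it goes --- by Lemma~\ref{lem:coeff-add-leak}, $J|_{k_{ij}=0}=[\,J'\mid g\,]$, so $k_{ij}\mid f$ is equivalent to $\det[\,J'\mid g\,]\equiv 0$ --- but the gap you flag at the end is fatal, not merely ``the hard part.'' That single identity says only that the $N$ columns are linearly dependent over the rational function field; it is equally consistent with $\operatorname{rank} J' = N-1$ and $g\in\operatorname{colspan}(J')$, in which case $\mathcal{M}'$ would be identifiable and the conjecture false. Nothing in your proposal excludes this scenario: the multiplicity idea yields extra relations only when $k_{ij}^2\mid f$, which the conjecture does not grant (multiplicity $1$ is allowed), and the ``infinitesimal symmetry'' suggestion is a hope, not an argument. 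The paper's Example~\ref{ex:mysterious1} (edge $k_{41}$ of Figure~\ref{fig:Mysterious1}) shows that the counting route cannot cover all cases: there the shortest path length does not increase, the Jacobian of $\mathcal{M}'$ is $8\times 7$, and unidentifiability holds because \emph{all} $7\times 7$ minors vanish --- a fact the paper verifies by direct computation and that your argument would need to produce in general but does not. So your proposal reproves (essentially) the paper's special case, and usefully reformulates the conjecture as the assertion that $g$ never lies in the column span of a full-rank $J'$, but it is not a proof of the conjecture.
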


Conjecture~\ref{conj:dividing-edge} is perhaps surprising; indeed, we initially expected a negative answer to~\cite[Question 3.4]{GNA17Three}.  In a later section, we make progress toward Conjecture~\ref{conj:dividing-edge} (see Theorem~\ref{thm:dividing-edge}).

\begin{remark}[Strong-connectedness hypothesis in Conjecture~\ref{conj:dividing-edge}] \label{rem:which-edges-break-strong-connectedness}
In Conjecture~\ref{conj:dividing-edge}, the requirement that the model $\mathcal M'$ be strongly connected is so that (as mentioned earlier) the input-output approach to identifiability is valid~\cite{Ovchinnikov-Pogudin-Thompson}.  Nevertheless, it is our observation that in many cases the dividing edges of a model are such that the removal of such an edge would break strong connectedness. This lack of strong connectedness means that, at least intuitively, information is not fully flowing through the model from input to output, resulting in a situation that would lend itself to being unidentifiable.  This is an interesting avenue for future research.
\end{remark}

\section{Results on leaks} \label{sec:results-leak}
In this section, we prove results on what happens to identifiability when 
leaks are added (Section~\ref{sec:add-leak})
or removed (Section~\ref{sec:remove-leak}).

\subsection{Adding a leak} \label{sec:add-leak}
In this subsection, we address two questions concerning leaks.  
The first, Question~\ref{q:remove-leak}, was posed earlier (see Theorem~\ref{thm:add-leak}).
The second is the question of how many added leaks force an identifiable model to become unidentifiable (see Theorem~\ref{thm:too-many-leaks}).

Recall that Question~\ref{q:remove-leak} asked whether removing a leak from an identifiable model always yields an identifiable model.  
That question is equivalent to 
asking whether adding a leak preserves unidentifiability.
We conjecture an affirmative answer, as follows.
\begin{conjecture}[Add leak] \label{conj:uniden-add-1-leak}
Let $\mathcal{M}$ be an unidentifiable linear compartmental model. If one leak is added, the resulting model $\widetilde{\mathcal{M}}$ is also unidentifiable.
\end{conjecture}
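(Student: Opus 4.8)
The plan is to work entirely through the Jacobian-rank criterion of Proposition~\ref{prop:MSE}. Write $N := |E| + |Leak|$ for the number of parameters of $\mathcal{M}$ and $c$ for its coefficient map; by Definition~\ref{def:identifiable}, unidentifiability of $\mathcal{M}$ means the generic rank of $\mathrm{Jac}(c)$ is some $r < N$. Adjoining one leak $k_{0v}$ produces $\widetilde{\mathcal{M}}$ with $N+1$ parameters and coefficient map $\widetilde{c}$, and the goal is to show $\mathrm{rank}\,\mathrm{Jac}(\widetilde{c}) < N+1$. The first step is to record the structural input underlying Lemma~\ref{lem:coeff-add-leak}: since $\partial I - \widetilde{A} = (\partial I - A) + k_{0v}\, e_v e_v^{\mathsf T}$, the matrix-determinant lemma gives
\[
\det(\partial I - \widetilde{A}) ~=~ \det(\partial I - A) + k_{0v}\,\det\!\big((\partial I - A)_{vv}\big),
\]
and likewise for every minor on the right-hand side of~\eqref{eq:i-o-equation}. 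This simultaneously extends Lemma~\ref{lem:coeff-add-leak} to arbitrarily many inputs and outputs and identifies the perturbation polynomials explicitly, so that each coefficient is affine-linear in the new parameter, $\widetilde{d_{\ell}} = d_{\ell} + k_{0v}\, g_{\ell}$, where the vector $G := (g_{\ell})$ is built from the deleted-row-and-column minors and is free of $k_{0v}$. We track the full vector $G$ (including any coefficient whose $d_\ell$ is zero and which becomes nontrivial only once the leak is added, as when $\mathcal{M}$ had no leaks).

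Write $a_s := \partial_{k_s} c$ and $b_s := \partial_{k_s} G$ for the old parameters $k_s$. Evaluating at a point $p$ together with a value $k := k_{0v}$, the columns of $\mathrm{Jac}(\widetilde{c})$ are $a_s + k\, b_s$ for the $N$ old parameters, together with the single leak column $G$. Thus it suffices to exhibit, at a generic point, a nonzero linear dependence among these $N+1$ columns. Unidentifiability of $\mathcal{M}$ supplies a nonzero $\lambda = \lambda(p)$ with $\mathrm{Jac}(c)\,\lambda = \sum_s \lambda_s a_s = 0$ (a tangent direction to the positive-dimensional fibers of $c$), and then
\[
\sum_s \lambda_s\,(a_s + k\, b_s) ~=~ k\sum_s \lambda_s\, b_s ~=~ k\,\mathrm{Jac}(G)\,\lambda .
\]
Hence the columns are dependent as soon as $\mathrm{Jac}(G)\,\lambda$ is a scalar multiple of $G$, and in that case an explicit kernel vector of $\mathrm{Jac}(\widetilde{c})$ forces $\mathrm{rank}\,\mathrm{Jac}(\widetilde{c}) \le N$, i.e.\ $\widetilde{\mathcal{M}}$ is unidentifiable. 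Equivalently and more geometrically, if the augmented map $(c,G)$ still has positive-dimensional generic fibers, then any fiber curve $p(t)$ of $(c,G)$ yields, for every fixed $k$, a curve $(p(t),k)$ along which $\widetilde{c} = c(p(t)) + k\, G(p(t))$ is constant, so $\widetilde{c}$ is infinite-to-one.

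The whole difficulty collapses to one compatibility claim: the unidentifiability direction $\lambda$ of $\mathrm{Jac}(c)$ can be chosen so that $\mathrm{Jac}(G)\,\lambda \in \mathrm{span}(G)$; equivalently, adjoining the leak-perturbation coordinates $G$ to the coefficient map does not raise the generic rank from $r$ to $N$. This is exactly the step that is not formal, since appending coordinate functions to a map generically increases its rank, so the special determinantal structure of $G$ must be used. The promising handle is that, coefficientwise, $G = -\,\partial_{A_{vv}} c$, whence $\mathrm{Jac}(G)\,\lambda = -\,\partial_{A_{vv}}(\partial_{\lambda} c)$ is a mixed second derivative of the very determinantal coefficients whose first derivative $\partial_{\lambda} c$ vanishes along the fiber; I would try to convert the vanishing of $\partial_{\lambda} c$ on the fiber into the desired parallelism using the cofactor/spanning-forest expansions of the minors together with the column-sum relations of the compartmental matrix (each column of $A$ sums to $-k_{0i}$). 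I expect this to be the main obstacle, and to be provable in full generality only under extra hypotheses: when the unidentifiability of $\mathcal{M}$ stems from a coordinate-scaling symmetry of the parameters — as in the too-many-parameters regime of Proposition~\ref{prop:bortner-meshkat-leaks} and Theorem~\ref{thm:add-leak} — the vector $\lambda$ is a weighted Euler (scaling) direction, and $\mathrm{Jac}(G)\,\lambda \in \mathrm{span}(G)$ follows from the homogeneity of $G$. The general case would likely require either an induction on the number of leaks or a transfer through the singular-locus reformulation of Theorem~\ref{thm:equivalence-of-conjectures}.
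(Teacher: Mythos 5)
Your write-up correctly identifies where the difficulty lies, but it does not prove the statement --- and it is worth being clear that the statement is a \emph{conjecture} in the paper (Conjecture~\ref{conj:uniden-add-1-leak}); the paper itself only proves the special case in which unidentifiability comes from a parameter count (Theorem~\ref{thm:add-leak}). Your setup is sound: writing the new compartmental matrix as a rank-one update so that every coefficient is affine-linear in $k_{0v}$, and noting that a kernel vector $\lambda$ of $\mathrm{Jac}(c)$ yields $\sum_s \lambda_s(a_s + k\,b_s) = k\,\mathrm{Jac}(G)\lambda$. But from there everything hinges on the ``compatibility claim'' that $\lambda$ can be chosen with $\mathrm{Jac}(G)\lambda \in \mathrm{span}(G)$, which you explicitly leave unproved. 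That claim is not a technical lemma awaiting a cofactor computation; it carries the entire content of the conjecture, and it is in fact a \emph{strengthening} of it: it is sufficient but not known to be necessary, since a linear dependence among the columns of $\mathrm{Jac}(\widetilde{c})$ need not arise from a vector lying in $\ker\mathrm{Jac}(c)$ (the dependence can mix the old columns and the leak column in ways your ansatz excludes). So the proposal is a reformulation of an open problem, not a proof of it.

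The one case you claim to dispose of --- the too-many-parameters regime --- is also not correct as sketched. Having $|E|+|Leak|>m$ forces $\ker\mathrm{Jac}(c)\neq 0$ for dimension reasons only; it does not produce a scaling symmetry, so there is no reason $\lambda$ can be taken to be a weighted Euler direction. Moreover, even granting a genuine scaling symmetry (each $c_\ell$ weighted-homogeneous of degree $0$, so that the Euler vector lies in $\ker\mathrm{Jac}(c)$), Euler's identity gives $\mathrm{Jac}(G)\lambda = (d_\ell\, g_\ell)_\ell$ where $d_\ell$ is the weighted degree of $g_\ell$; since the nonzero $g_\ell$ coming from the coefficients of~\eqref{eq:i-o-equation} have \emph{different} degrees, this vector is not a scalar multiple of $G$ unless the weights are very special, so ``follows from homogeneity'' does not go through. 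The paper's Theorem~\ref{thm:add-leak} handles this regime with no kernel vector at all: adding a leak adds one parameter but, by Proposition~\ref{prop:number-of-coefficients}, at most one nontrivial coefficient, so the strict excess of parameters over coefficients persists and Proposition~\ref{prop:MSE} gives unidentifiability. If you want a piece of the conjecture you can actually prove, that counting argument is the route; the general case (where the Jacobian has enough rows but its maximal minors all vanish) remains open.
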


For strongly connected models, two subcases of Conjecture~\ref{conj:uniden-add-1-leak} naturally 
arise (through Proposition~\ref{prop:MSE}).  
In the first subcase, a model $\mathcal{M}$ is unidentifiable because all maximal minors of the Jacobian matrix are 0.  In the second subcase, unidentifiability arises because the Jacobian matrix has more columns than rows (that is, the model has more parameters than coefficients of the input-output equations), and so the rank of the $m~\times~(|E|+|Leak|)$ Jacobian can not equal $|E|+|Leak|$. 
This second subcase is addressed in the following result.

\begin{theorem}[Add leak] \label{thm:add-leak}
Let $\mathcal{M}$ be a strongly connected linear compartmental model with $|In|=|Out|=1$
and coefficient map $c : \mathbb{R}^{|E| + |Leak|} \to \mathbb{R}^m$.
Consider the model $\widetilde{\mathcal{M}}$ formed by adding a single leak to $\mathcal{M}$. 
If $|E|~+~|Leak|~>~m$ and so $\mathcal{M}$ is unidentifiable, then $\widetilde{\mathcal{M}}$ is also unidentifiable.
\end{theorem}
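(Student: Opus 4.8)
The plan is to reduce the claim to a dimension count and then invoke Proposition~\ref{prop:MSE}. Recall that a strongly connected model with at least one input is identifiable precisely when the Jacobian of its coefficient map attains full column rank, equal to the number of parameters. If the number of parameters strictly exceeds the number of nontrivial coefficients $m$ (the number of rows of the Jacobian), then full column rank is impossible and the model is unidentifiable; this is exactly the hypothesis $|E| + |Leak| > m$ supplied for $\mathcal{M}$. Since adding a single leak increases the parameter count by exactly one, it suffices to show that the coefficient count increases by \emph{at most} one. The strict inequality ``parameters $>$ coefficients'' will then survive the operation, forcing $\widetilde{\mathcal{M}}$ to be unidentifiable as well.

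First I would confirm that $\widetilde{\mathcal{M}}$ still meets the hypotheses of Propositions~\ref{prop:MSE} and~\ref{prop:number-of-coefficients}: adding a leak deletes no edges and no compartments, so $\widetilde{\mathcal{M}}$ remains strongly connected with $|In| = |Out| = 1$ and at least one input. Next, writing $\widetilde{m}$ for the number of nontrivial coefficients of $\widetilde{\mathcal{M}}$, I would track the change using Proposition~\ref{prop:number-of-coefficients}. The right-hand-side count there depends only on the number of compartments $n$, the shortest input-to-output path length $L$, and whether $In = Out$; adding a leak alters none of these (it introduces no new compartment and no edge between compartments), so the right-hand-side count is unchanged. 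For the left-hand-side count, I would split into two cases. If $\mathcal{M}$ already has a leak, both models lie in the ``$Leak \neq \emptyset$'' branch, giving left-hand-side count $n$ for each, so $\widetilde{m} = m$. If $\mathcal{M}$ has no leak, then $\mathcal{M}$ has left-hand-side count $n-1$ while $\widetilde{\mathcal{M}}$ (now having a leak) has left-hand-side count $n$, so $\widetilde{m} = m + 1$. In either case $\widetilde{m} \le m + 1$.

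Combining these observations, the parameter count of $\widetilde{\mathcal{M}}$ exceeds its coefficient count, since
\[
    (|E| + |Leak| + 1) - \widetilde{m} ~\ge~ (|E| + |Leak| + 1) - (m + 1) ~=~ (|E| + |Leak|) - m ~>~ 0 .
\]
Hence the Jacobian of the coefficient map of $\widetilde{\mathcal{M}}$ has more columns than rows and cannot have full column rank, so by Proposition~\ref{prop:MSE} the model $\widetilde{\mathcal{M}}$ is unidentifiable. The argument is essentially a bookkeeping one; the only substantive point -- and the step I would check most carefully -- is the verification that adding a leak raises the coefficient count by at most one. This is where Proposition~\ref{prop:number-of-coefficients} and the invariance of $n$, $L$, and the comparison $In = Out$ under the operation do all the work, and it is the natural place for a subtle off-by-one error to hide.
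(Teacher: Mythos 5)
Your proposal is correct and follows essentially the same argument as the paper: both count parameters versus nontrivial coefficients via Proposition~\ref{prop:number-of-coefficients} (noting the coefficient count grows by at most one, and only in the $Leak = \emptyset$ case), and then conclude unidentifiability from Proposition~\ref{prop:MSE} because the Jacobian has more columns than rows. Your write-up is simply a more explicit version of the paper's proof, spelling out the invariance of $n$, $L$, and the $In = Out$ comparison that the paper leaves implicit.
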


\begin{proof}
Assume that $|E| + |Leak|$ (the number of parameters of $\mathcal{M}$) is strictly larger than $m$ (the number of coefficients).  For $\widetilde{\mathcal{M}}$, the number of parameters is $|E| + |Leak|+1$, while the number of coefficients is $m$ (if $Leak$ is nonempty) or $m+1$ (if $Leak$ is empty) by 
Proposition~\ref{prop:number-of-coefficients}. 
Thus, by Proposition~\ref{prop:MSE},  $\widetilde{\mathcal{M}}$ is unidentifiable.
\end{proof}

\begin{remark} \label{rem:related-result}
A recent result of Bortner and Meshkat is closely related to Theorem~\ref{thm:add-leak} and pertains to when $|Leak|=|In \cup Out|$~\cite[Theorem 4.1]{bortner-meshkat}.
\end{remark}

%

Next, we consider adding leaks to a model that may or may not be identifiable.  How many leaks are too many, that is, how many leaks need to be added so that the model is automatically unidentifiable?  The following result addresses this question. 

\begin{theorem}[Too many leaks] \label{thm:too-many-leaks}
Let $\mathcal{M}= (G, In, Out, Leak)$ be a 
strongly connected linear compartmental model with at least one leak and $|In|=|Out|=1$.  
Let $n$ be the number of compartments, and let
$L$ be the length of the shortest (directed) path in $G$ from the input compartment to the output compartment. 
If one of the following holds:
\begin{enumerate}
    \item[(I)] $In=Out$ and $|Leak| > \min \{ 1,~ 2n- |E| - 1 \}$, or
    \item[(II)] $In \neq Out$ and $|Leak| > \min \{2,~  2n- |E| - L \}$, 
\end{enumerate}
then $\mathcal{M}$ is unidentifiable.
\end{theorem}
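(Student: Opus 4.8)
The plan is to show that each of conditions (I) and (II) forces one of two already-available sufficient conditions for unidentifiability, by unwinding the $\min$ in the hypothesis. The key observation is elementary: for any integers $a, b$, the inequality $|Leak| > \min\{a, b\}$ holds precisely when $|Leak| > a$ or $|Leak| > b$. Thus in case (I), where $In = Out$, the hypothesis $|Leak| > \min\{1,\ 2n - |E| - 1\}$ splits into the two subcases $|Leak| > 1$ and $|Leak| > 2n - |E| - 1$; in case (II), where $In \neq Out$, the hypothesis $|Leak| > \min\{2,\ 2n - |E| - L\}$ splits into $|Leak| > 2$ and $|Leak| > 2n - |E| - L$. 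I would treat each subcase separately and show that each already guarantees unidentifiability.

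For the \emph{first} subcase in each branch, I would invoke Proposition~\ref{prop:bortner-meshkat-leaks}. Since $|In| = |Out| = 1$, we have $|In \cup Out| = 1$ when $In = Out$ and $|In \cup Out| = 2$ when $In \neq Out$. Hence the conditions $|Leak| > 1$ (in case (I)) and $|Leak| > 2$ (in case (II)) are exactly the hypothesis $|Leak| > |In \cup Out|$ of that proposition, which immediately yields unidentifiability.

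For the \emph{second} subcase, I would count coefficients and parameters and then apply the rank criterion of Proposition~\ref{prop:MSE}, exactly as in the proof of Theorem~\ref{thm:add-leak}. By Proposition~\ref{prop:number-of-coefficients}, since $\mathcal{M}$ has at least one leak, the total number of nontrivial coefficients is $m = n + (n-1) = 2n - 1$ when $In = Out$ and $m = n + (n - L) = 2n - L$ when $In \neq Out$, while the number of parameters is $|E| + |Leak|$. Rearranging, $|Leak| > 2n - |E| - 1$ is equivalent to $|E| + |Leak| > 2n - 1 = m$ in case (I), and $|Leak| > 2n - |E| - L$ is equivalent to $|E| + |Leak| > 2n - L = m$ in case (II). In either situation there are strictly more parameters than coefficients, so the $m \times (|E| + |Leak|)$ Jacobian matrix of $c$ cannot attain rank $|E| + |Leak|$; by Proposition~\ref{prop:MSE}, $\mathcal{M}$ is unidentifiable. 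Combining the two subcases in each branch completes the argument.

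I do not anticipate a genuine obstacle here: the result is essentially a packaging of Proposition~\ref{prop:bortner-meshkat-leaks} together with the parameter-counting argument behind Theorem~\ref{thm:add-leak}. The only points requiring care are the correct reading of the $\min$ (so that one thoroughly covers both disjuncts) and the bookkeeping that confirms the two thresholds appearing in the hypothesis match exactly the thresholds supplied by those two results. The role of the $\min$ is precisely to record that \emph{whichever} of the ``too many leaks'' or ``too many parameters'' obstructions kicks in first is enough to conclude unidentifiability.
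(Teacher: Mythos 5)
Your proposal is correct and follows essentially the same route as the paper's own proof: the first subcase of each branch is handled by Proposition~\ref{prop:bortner-meshkat-leaks}, and the second by counting coefficients via Proposition~\ref{prop:number-of-coefficients} and applying the rank criterion of Proposition~\ref{prop:MSE}. Your explicit unwinding of the $\min$ as a disjunction is just a more carefully stated version of the paper's case split.
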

\begin{proof}
If $In=Out$ and $|Leak|>1$, or if $In \neq Out$ and $|Leak|>2$, then Proposition~\ref{prop:bortner-meshkat-leaks} implies that $\mathcal M$ is unidentifiable.  

Now assume that we are in the remaining cases, when 
$In=Out$ and $|Leak| > 2n- |E| - 1 $, or when $In \neq Out$ and $|Leak| >  2n- |E| - L $.   
The model $\mathcal{M}$ has $|E|+|Leak|$ parameters.  
By Proposition~\ref{prop:number-of-coefficients},
the number of 
nontrivial 
coefficients on the left-hand side of
the input-output equation~\eqref{eq:i-o-equation} is
$n$, while the number on the right-hand side is $n-1$ (if $In=Out$) or
$n-L$ (if $In \neq Out$).  
It follows easily from our hypotheses on the number of leaks that $|E|+ |Leak|$ exceeds the number of
coefficients, and so, by Proposition~\ref{prop:MSE}, the model $\mathcal{M}$ is unidentifiable.
\end{proof}

\begin{example} \label{ex:cycle}
Let $\mathcal{M}= (G, In, Out, Leak)$ be a model for which $G$ is the following cycle graph:
\begin{center}
    \begin{tikzpicture}[scale=2]
  	\draw (-1,0) circle (0.2);	
  	\draw (-0.5,0.866) circle (0.2);	
  	\draw (0.5,-0.866) circle (0.2);	
  	\draw (-0.5,-0.866) circle (0.2);	
    \draw[->] (-0.85, -0.25) -- (-0.62, -0.64 );
  	 \draw[->]  (-0.2,-0.866) -- (0.2,-0.866) ;
 	 \draw[->]  (0.62, -0.64) -- (0.9,-0.17); 
 	 \draw[loosely dotted,thick] (0.9,0.17) -- (0.5,0.866);
 	 \draw[->]  (0.2,0.866) -- (-0.2,0.866);
 	 \draw[->] (-0.62, 0.64 ) -- (-0.85, 0.25)  ;
   	 \node[] at (0.0, -1.03) {$k_{32}$};
   	 \node[] at (0.0, 1.03) {$k_{n,n-1}$};
   	 \node[] at (-0.55, -0.42) {$k_{21}$};
   	 \node[] at (-0.55, 0.4) {$k_{1n}$};
   	 \node[] at (0.55, -0.42) {$k_{43}$};
     	\node[] at (-1, 0) {1};
     	\node[] at  (-0.5,-0.866) {$2$};
     	\node[] at  (+0.5,-0.866) {$3$};
     	\node[] at (-0.5,0.866) {$n$};
 \end{tikzpicture}
\end{center}
Assume that $In=\{1\}$ and $Out=\{n\}$.  It follows from Theorem~\ref{thm:too-many-leaks} 
that $\mathcal{M}$ is unidentifiable if there is more than one leak 
(we have $|E|=n$ and $L=n-1$). 
This result can also be obtained 
from~\cite[Corollary 3.11]{gerberding2019identifiability}.
\end{example}


\subsection{Removing a leak} \label{sec:remove-leak}
In this section we address 
Conjecture~\ref{conj:rmv-leak}, 
which we recall states that, 
starting from an identifiable, strongly connected model with only one leak, 
removing the leak yields a model that is again identifiable.
Here, 
we prove that 
(under some hypotheses)
Conjecture~\ref{conj:rmv-leak} is equivalent to a new conjecture (Conjecture~\ref{conj:leaks-do-not-divide}), 
which 
states that leak terms do not divide the singular-locus equation 
(Theorem~\ref{thm:equivalence-of-conjectures}).  
It will then follow that Conjecture~\ref{conj:rmv-leak} holds whenever  Conjecture~\ref{conj:leaks-do-not-divide} is true (Remark~\ref{rem:when-conjs-true}).

\begin{conjecture}[Leaks are not dividing terms] \label{conj:leaks-do-not-divide}
Let 
 $\widetilde{\mathcal{M}} = (G, In, Out, Leak)$ be a strongly connected linear compartmental model with 
 at least one input and 
 at least one leak.
Assume that $ \widetilde{\mathcal{M}}$ is generically locally identifiable, and has singular-locus equation $f$.  Then $k_{0\ell} \nmid f$ for all $\ell \in Leak$.
\end{conjecture}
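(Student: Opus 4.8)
The plan is to prove the cleaner statement in the single-leak case, where the Jacobian is square: namely, that $k_{0\ell}\nmid f$ if and only if the model $\mathcal{M}$ obtained by deleting the leak is generically locally identifiable. Since $f=\det(\widetilde J)$, where $\widetilde J$ is the square Jacobian of the coefficient map of $\widetilde{\mathcal M}$, the divisibility $k_{0\ell}\mid f$ is equivalent to $f|_{k_{0\ell}=0}=0$, so I would compute $f|_{k_{0\ell}=0}$ explicitly. The starting observation is that deleting the leak kills exactly one coefficient: for a leak-free strongly connected model the compartmental matrix has zero column sums, so its determinant vanishes and the constant coefficient $d_0$ (the coefficient of $y_j$) is trivial. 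This is precisely why Proposition~\ref{prop:number-of-coefficients} records one fewer left-hand-side coefficient when $Leak=\emptyset$.

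Next I would exploit Lemma~\ref{lem:coeff-add-leak}, which writes each coefficient of $\widetilde{\mathcal M}$ as $\widetilde d_\ell=d_\ell+k_{0\ell}\,g_\ell$ with $g_\ell$ a polynomial in the leak-free parameters only. In particular $\widetilde d_0=k_{0\ell}\,g_0$, since $d_0=0$. Ordering the rows of $\widetilde J$ so that the constant-term coefficient comes first and the remaining coefficients follow, and ordering the columns so that the edge parameters come first and $k_{0\ell}$ comes last, the specialization at $k_{0\ell}=0$ acquires the block form
\[
\widetilde J\big|_{k_{0\ell}=0}
=
\begin{pmatrix}
\mathbf 0^{\mathsf T} & g_0 \\
J & \mathbf g
\end{pmatrix},
\]
where $J$ is the square Jacobian of the leak-free model $\mathcal M$ and $\mathbf g=(g_1,\dots,g_{|E|})^{\mathsf T}$. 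Cofactor expansion along the first row then yields $f|_{k_{0\ell}=0}=\pm\,g_0\,\det J$.

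It remains to analyze the two factors. The factor $g_0=\partial \widetilde d_0/\partial k_{0\ell}$ equals, up to sign, the principal minor of the compartmental matrix obtained by deleting row and column $\ell$; by the directed Matrix--Tree theorem this minor is the generating polynomial of spanning arborescences rooted at $\ell$, and strong connectedness guarantees that at least one such arborescence exists, so $g_0\neq 0$ as a polynomial. Hence $k_{0\ell}\mid f$ if and only if $\det J=0$, which by Proposition~\ref{prop:MSE} happens exactly when $\mathcal M$ is unidentifiable. Taking the contrapositive gives the desired equivalence, and in particular $k_{0\ell}\nmid f$ whenever the leak-removed model $\mathcal M$ is identifiable.

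The main obstacle is exactly this last clause: the factor $\det J\neq 0$ is the statement that removing the leak preserves identifiability, i.e.\ Conjecture~\ref{conj:rmv-leak}. Thus the computation reduces Conjecture~\ref{conj:leaks-do-not-divide} to Conjecture~\ref{conj:rmv-leak} (and conversely), so the present statement is provable unconditionally only in the cases where the leak-removal conjecture is already known, while a full proof in general is equivalent to settling that conjecture. Two further points would need care to complete the argument: handling the multiple-leak case, where setting one $k_{0\ell}=0$ leaves a model that still has leaks (so the block decomposition must be adapted and the coefficient count rechecked), and confirming that the Jacobian is genuinely square in the relevant regime, so that $f$ is a single determinant rather than a tuple of maximal minors.
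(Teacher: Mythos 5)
Your proposal is correct and takes essentially the same route as the paper's own treatment of this statement (Theorem~\ref{thm:equivalence-of-conjectures}): the paper likewise restricts to the single-leak, square-Jacobian setting, uses Lemma~\ref{lem:coeff-add-leak} and the vanishing of the constant coefficient of the leak-free model to obtain the block-triangular specialization of $\widetilde J$ at $k_{0\ell}=0$ and the factorization $f|_{k_{0\ell}=0} = \pm\,\frac{\partial \widetilde c_r}{\partial k_{0\ell}}\cdot \det J$, and concludes exactly your reduction, namely that Conjecture~\ref{conj:leaks-do-not-divide} is equivalent to Conjecture~\ref{conj:rmv-leak} (so the conjecture is settled only in the cases where leak removal is known to preserve identifiability). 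The single difference is minor: where you identify $g_0$ as an arborescence generating polynomial via the directed Matrix--Tree theorem, the paper deduces $\frac{\partial \widetilde c_r}{\partial k_{0\ell}} \neq 0$ more briefly from the facts that $\widetilde c_r \neq 0$ while $\widetilde c_r|_{k_{0\ell}=0} = 0$.
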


The following definition captures when a model has the same number of parameters as coefficients of the input-output equation, that is, the resulting Jacobian matrix is square. 
\begin{definition} \label{def:square-jac}
Let $\mathcal{M} = (G,In,Out,Leak)$ be a strongly connected linear compartmental model with $|In|=|Out|=1$.  The model $\mathcal{M}$ is {\em square-Jacobian} if the sum $|E|+|Leak|$ equals the total number of nontrivial left-hand side and right-hand side coefficients asserted in Proposition~\ref{prop:number-of-coefficients}. 
\end{definition}

\begin{theorem}[Equivalence of Conjectures~\ref{conj:rmv-leak} and~\ref{conj:leaks-do-not-divide}] \label{thm:equivalence-of-conjectures}
Let $\widetilde{\mathcal{M}} = (G,In,Out,Leak)$ be a strongly connected linear compartmental model 
with $|In|=|Out|=|Leak|=1$.
Assume that $ \widetilde{\mathcal{M}}$ is generically locally identifiable and square-Jacobian. 
Then, Conjecture~\ref{conj:rmv-leak} holds for 
$\widetilde{\mathcal{M}}$
if and only if 
Conjecture~\ref{conj:leaks-do-not-divide} holds for 
$\widetilde{\mathcal{M}}$.
\end{theorem}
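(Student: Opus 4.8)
The plan is to unwind both conjectures into statements about the non-vanishing of determinants and then relate those determinants directly. Write $\ell$ for the unique leak compartment of $\widetilde{\mathcal{M}}$, with parameter $k_{0\ell}$, and let $\mathcal{M}$ be the model obtained by deleting this leak; note that $\mathcal{M}$ has the same (strongly connected) graph $G$ and the same single input and output. Since $\widetilde{\mathcal{M}}$ is square-Jacobian, its coefficient map has an $(|E|+1)\times(|E|+1)$ Jacobian $\widetilde{J}$, and the singular-locus equation is $f=\det\widetilde{J}$, a nonzero polynomial because $\widetilde{\mathcal{M}}$ is identifiable. Deleting the leak lowers the number of nontrivial left-hand-side coefficients by exactly one (from $n$ to $n-1$) while leaving the right-hand-side count unchanged, by Proposition~\ref{prop:number-of-coefficients}; hence $\mathcal{M}$ is again square-Jacobian, with an $|E|\times|E|$ Jacobian $J$. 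By Proposition~\ref{prop:MSE}, Conjecture~\ref{conj:rmv-leak} holds for $\widetilde{\mathcal{M}}$ (i.e.\ $\mathcal{M}$ is identifiable) precisely when $\det J\neq 0$. On the other side, since $k_{0\ell}$ is one of the coordinate variables, Conjecture~\ref{conj:leaks-do-not-divide} holds for $\widetilde{\mathcal{M}}$ (i.e.\ $k_{0\ell}\nmid f$) precisely when $f|_{k_{0\ell}=0}\neq 0$.

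The heart of the argument is to compute $f|_{k_{0\ell}=0}$ and show it equals $\det J$ up to a nonzero factor. By Lemma~\ref{lem:coeff-add-leak}, every coefficient satisfies $\widetilde{d}=d+k_{0\ell}g$ with $g$ a polynomial in the edge parameters alone; this linearity in $k_{0\ell}$ reflects that the leak alters only the single diagonal entry $\widetilde{A}_{\ell\ell}=A_{\ell\ell}-k_{0\ell}$. The unique coefficient that is nontrivial for $\widetilde{\mathcal{M}}$ but trivial for $\mathcal{M}$ is the constant term $\widetilde{d}_0=(-1)^n\det\widetilde{A}$ of the left-hand side, and since $\det A=0$ for the leak-free model (its columns sum to zero) one gets $\widetilde{d}_0=k_{0\ell}g_0$ with $g_0=\pm\det A^{(\ell)}$, the $(\ell,\ell)$-minor of $A$. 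Ordering the rows of $\widetilde{J}$ with $\widetilde{d}_0$ first and its columns with $k_{0\ell}$ first, and setting $k_{0\ell}=0$, the top row collapses to $(g_0,0,\dots,0)$ while the remaining block reduces to $J$; the resulting block lower-triangular form gives $f|_{k_{0\ell}=0}=\pm g_0\det J$.

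It then remains to verify $g_0\neq 0$, and this is the step I expect to be the main obstacle, as everything else is bookkeeping. Here I would invoke the Matrix--Tree theorem for digraphs applied to the Laplacian $-A$: up to sign, $\det A^{(\ell)}$ is the generating polynomial of spanning arborescences of $G$ rooted at $\ell$, a sum of monomials $\prod k_{ij}$ with no cancellation. Because $\widetilde{\mathcal{M}}$ is strongly connected, at least one such arborescence exists, so $g_0=\pm\det A^{(\ell)}$ is a nonzero polynomial. Combining the pieces yields
\begin{align*}
\text{Conjecture~\ref{conj:leaks-do-not-divide} holds for } \widetilde{\mathcal{M}}
& \iff f|_{k_{0\ell}=0}\neq 0
\iff g_0\det J\neq 0 \\
& \iff \det J\neq 0
\iff \text{Conjecture~\ref{conj:rmv-leak} holds for } \widetilde{\mathcal{M}},
\end{align*}
where the middle equivalence uses $g_0\neq 0$. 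This completes the plan.
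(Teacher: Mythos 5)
Your proposal is correct, and its skeleton matches the paper's proof: both sides are reduced to non-vanishing of determinants (Conjecture~\ref{conj:rmv-leak} to $\det J\neq 0$ via Proposition~\ref{prop:MSE}, Conjecture~\ref{conj:leaks-do-not-divide} to $f|_{k_{0\ell}=0}\neq 0$), and the key identity $f|_{k_{0\ell}=0}=\pm\, g_0\det J$ is obtained from exactly the same block-triangular collapse of the Jacobian at $k_{0\ell}=0$, using Lemma~\ref{lem:coeff-add-leak} and the vanishing of the constant left-hand-side coefficient for the leak-free model (the paper orders that coefficient last rather than first, which is immaterial). The one step where you genuinely diverge is the one you flagged as the main obstacle: showing the connecting factor $g_0$ is nonzero. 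You make it explicit as $g_0=\pm\det A^{(\ell)}$ and invoke the directed Matrix--Tree theorem together with strong connectedness; this is correct (strong connectedness guarantees a spanning in-tree rooted at $\ell$, and the arborescence expansion is a sum of monomials with no cancellation) and has the virtue of giving a concrete combinatorial formula for the factor. The paper instead dispatches this step with a two-line soft argument needing no graph theory: by Lemma~\ref{lem:coeff-add-leak} the constant coefficient $\widetilde{c}_r$ is linear in $k_{0\ell}$, it is a nontrivial (hence nonzero) coefficient of $\widetilde{\mathcal{M}}$ since $Leak\neq\emptyset$, and it restricts to $0$ at $k_{0\ell}=0$; these facts force $\partial\widetilde{c}_r/\partial k_{0\ell}=g_0\neq 0$. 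So your route spends an external theorem to buy explicit information (the arborescence polynomial), while the paper's is more elementary and self-contained; both are valid.
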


\begin{proof}
Let $\ell$ denote the unique leak compartment. 
By Proposition~\ref{prop:number-of-coefficients}, and the square-Jacobian hypothesis, 
the number of 
nontrivial 
coefficients of the input-output equation~\eqref{eq:i-o-equation} for $\widetilde{\mathcal{M}}$ is $r=|E|+ |Leak|$.  We denote these coefficients by $\widetilde{c}_1, \widetilde{c}_2, \dots , \widetilde{c}_r$, where we have chosen some ordering so  that $\widetilde{c}_r$ is the constant term of the left-hand side of~\eqref{eq:i-o-equation}.

By Lemma~\ref{lem:coeff-add-leak}, 
the corresponding coefficients of the input-output equation for the model $\mathcal{M} $
(obtained by removing the leak from $\mathcal M'$)
are 
\begin{align} \label{eq:coeffs-related}
c_i = \widetilde{c}_i|_{k_{0 \ell}=0}~,
\end{align}
for $i=1,2,\dots, r$.  
We also know that $c_r=0$ (see~\cite[Remark~2.1]{gerberding2019identifiability}).

Let $J$ be the Jacobian matrix of $(c_1,c_2, \dots, c_{r-1})$, with respect to some ordering of the $|E|$ parameters of $\mathcal{M}$.  Let $\widetilde{J}$ denote the Jacobian matrix of $(\widetilde{c}_1,\widetilde{c}_2, \dots, \widetilde{c}_{r-1})$ with respect to the same ordering of parameters, plus (for the last column of the matrix) the leak parameter $k_{0 \ell}$.  We claim that the matrices $J$ and $\widetilde J$ 
(which are square by the square-Jacobian hypothesis) 
are related as follows:
	\begin{align} \label{eq:compare-two-jacobians}
	\widetilde J |_{k_{0 \ell }=0} 
	~&=~
	 \left( \begin{array}{ccc|c}
	~& ~& ~ & * \\
	~& J & ~ & \vdots \\
	~& ~& ~ & * \\
\hline
	0 & \dots & 0 & \frac{\partial  \widetilde c_r }{\partial k_{0 \ell}} \\
 \end{array} \right)
	\end{align}

To prove the claim, we begin by noting that equation~\eqref{eq:coeffs-related} and the equality $c_r=0$ imply 
that the matrix on the left-hand side of~\eqref{eq:compare-two-jacobians} 
and the matrix on the right-hand side 
have the same columns, except possibly the last column.  Next, the entry in the lower-right corner of the matrix $\widetilde{J}|_{k_{0 \ell}=0}$ is 
$\frac{\partial  \widetilde c_r }{\partial k_{0 \ell}} |_{k_{0 \ell}=0}  = 
\frac{\partial  \widetilde c_r }{\partial k_{0 \ell}} $, where we are using the fact that $k_{0 \ell}$ appears only linearly (and not to higher powers) in $\widetilde{c}_r$.  Hence, the claimed equality~\eqref{eq:compare-two-jacobians} holds, and so we can take determinants to obtain:

\begin{align} \label{eq:2-dets}
    \left( \det \widetilde J \right)|_{k_{0 \ell}=0} ~=~
        \frac{\partial  \widetilde c_r }{\partial k_{0 \ell}} \cdot  ( \det J ) ~.
    \end{align}

We next claim that $\frac{\partial  \widetilde c_r }{\partial k_{0 \ell}} $ is nonzero. Indeed, this must hold in order for the inequality 
$\widetilde{c}_r \neq 0$ and the equalities
$0 = c_r = \widetilde{c}_r|_{k_{0 \ell} = 0}$ 
to hold.

Conjecture~\ref{conj:leaks-do-not-divide} holds for 
$\widetilde{\mathcal{M}}$ if and only if the left-hand side of~\eqref{eq:2-dets} is nonzero.  Also, 
Conjecture~\ref{conj:rmv-leak} holds for 
$\widetilde{\mathcal{M}}$
if and only if $\det J$ (appearing in the right-hand side of~\eqref{eq:2-dets}) is nonzero.  Thus, because $\frac{\partial  \widetilde c_r }{\partial k_{0 \ell}} \neq 0$, we see that the two conjectures are equivalent for $\widetilde{\mathcal M}$.
\end{proof}

\begin{remark}[Square vs.\ non-square Jacobian]
Theorem~\ref{thm:equivalence-of-conjectures} pertains to square-Jacobian models.  
For non-square-Jacobian models that are identifiable, there are more coefficients of the input-output equation~\eqref{eq:i-o-equation} than parameters, and being identifiable means that at least one minor of the Jacobian matrix is nonzero. If such a minor comes from a submatrix containing the row corresponding to the constant term of the left-hand side of the input-output equation (i.e., the coefficient $\widetilde{c}_r$ in the proof of Theorem~\ref{thm:equivalence-of-conjectures}), then the proof easily generalizes to accommodate this case.  
On the other hand, when no such minor exists, we do not know how to address this scenario.  However, we have never observed such a model!
\end{remark}

\begin{remark} \label{rem:when-conjs-true}
By Theorem~\ref{thm:equivalence-of-conjectures}, Conjecture~\ref{conj:leaks-do-not-divide} is true whenever Conjecture~\ref{conj:rmv-leak} holds, for instance, in the cases listed after Conjecture~\ref{conj:rmv-leak} (which include certain catenary, cycle, and mammillary models).
\end{remark}

\section{Results on dividing edges} \label{sec:edge}
In this section, we address Conjecture~\ref{conj:dividing-edge}, 
which we recall states that removing a dividing edge from an identifiable model results in a model that, if strongly connected, is unidentifiable.  Here we prove a special case of this conjecture, which pertains to the square-Jacobian case (Theorem~\ref{thm:dividing-edge}), and then investigate the  non-square-Jacobian case (Section~\ref{sec:non-square}).

\subsection{When distance from input to output increases}

We have observed that, for some models, a dividing edge 
appears 
in the shortest path from the input compartment to the output compartment and its removal increases the length of such a path. We can therefore apply the conjectured formula for the number of coefficients to obtain the following result.


\begin{theorem}[Remove dividing edge] \label{thm:dividing-edge}
Let $\mathcal{M}$ be a strongly connected linear compartmental model with $|In| = |Out|=1$ and $Leak = \emptyset$.
Assume that $\mathcal M$ is square-Jacobian and generically locally identifiable.  Assume, moreover, that $k_{ij}$ is a dividing edge such that the model
 $\mathcal{M}^\prime$ obtained from $\mathcal M$ by removing the edge $k_{ij}$ is strongly connected and the length of the shortest path from input to output has increased by at least $2$. 
Then $\mathcal M '$ is unidentifiable.
\end{theorem}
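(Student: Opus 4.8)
The plan is to argue exactly as in the proof of Theorem~\ref{thm:add-leak}: I would show that the reduced model $\mathcal{M}'$ has strictly more parameters than nontrivial input-output coefficients, so the Jacobian of its coefficient map has fewer rows than columns and therefore cannot attain full column rank; unidentifiability then follows immediately from Proposition~\ref{prop:MSE}. Note that the dividing-edge hypothesis plays no role in this counting; it serves only to situate the result within Conjecture~\ref{conj:dividing-edge}. The genuine engine of the argument is the assumed increase of at least $2$ in the input-to-output distance.

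First I would observe that the hypothesis on the shortest path forces $In \neq Out$: if $In = Out$, this length is $0$ both before and after deleting an edge, so it could not increase. Write $n$ for the number of compartments and $L$ (resp.\ $L'$) for the shortest input-to-output path length in $\mathcal{M}$ (resp.\ $\mathcal{M}'$), so that $L' \geq L + 2$. Since $Leak = \emptyset$ and $In \neq Out$, Proposition~\ref{prop:number-of-coefficients} gives $(n-1) + (n-L) = 2n - 1 - L$ nontrivial coefficients for $\mathcal{M}$, and the square-Jacobian hypothesis then pins down $|E| = 2n - 1 - L$. Deleting the single edge $k_{ij}$ yields $|E'| = |E| - 1 = 2n - 2 - L$ parameters for $\mathcal{M}'$.

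Next I would count the coefficients of $\mathcal{M}'$. Because $\mathcal{M}'$ is strongly connected with $|In| = |Out| = 1$, $Leak = \emptyset$, and $In \neq Out$, Proposition~\ref{prop:number-of-coefficients} applies and produces $(n-1) + (n - L') = 2n - 1 - L'$ nontrivial coefficients. Using $L' \geq L + 2$ bounds this by $2n - 3 - L$, which is strictly less than $|E'| = 2n - 2 - L$. Hence $\mathcal{M}'$ has strictly more parameters than coefficients, the rank of its Jacobian is at most $2n - 3 - L < |E'|$, and Proposition~\ref{prop:MSE} forces $\mathcal{M}'$ to be unidentifiable.

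The main subtlety I anticipate is the applicability of the coefficient-count formula to $\mathcal{M}'$: the count $n - L'$ of right-hand-side coefficients is the delicate ingredient (this is the ``formula for the number of coefficients'' being invoked). For the present argument, however, I only need an \emph{upper bound} on the number of nontrivial coefficients of $\mathcal{M}'$, so even if the exact value required further justification, the strict inequality $\#\,\text{coefficients} < |E'|$, and hence the conclusion, would persist. The one point to check with care is that $\mathcal{M}'$ genuinely satisfies $In \neq Out$ and remains in the $Leak = \emptyset$ regime, so that the $n - L'$ branch of Proposition~\ref{prop:number-of-coefficients} is the correct one to apply; both facts are immediate here.
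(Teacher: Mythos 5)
Your proposal is correct and follows essentially the same argument as the paper's proof: deduce $In \neq Out$ from the path-length increase, use the square-Jacobian hypothesis and Proposition~\ref{prop:number-of-coefficients} to show $\mathcal{M}'$ has strictly more parameters than nontrivial coefficients, and conclude via Proposition~\ref{prop:MSE}. The only difference is that you make the coefficient counts explicit ($2n-1-L$ versus $2n-1-L'$), whereas the paper states the decrease of at least $2$ directly.
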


\begin{proof}
We know that $In \neq Out$, as the length of the shortest path from input to output increases when $k_{ij}$ is removed. For $\mathcal{M}$, the number of coefficients equals the number of parameters, namely, $|E|+|Leak|$.  As for ${\mathcal{M}}'$, the number of parameters is $|E|+|Leak| -1$, and (by Proposition~\ref{prop:number-of-coefficients}) the number of coefficients has decreased by at least 2 and so is at most $|E|+|Leak| -2$.  Thus, as there are more parameters than coefficients of the input-output equation, $\mathcal{M}'$ is unidentifiable (by Proposition~\ref{prop:MSE}).
\end{proof}


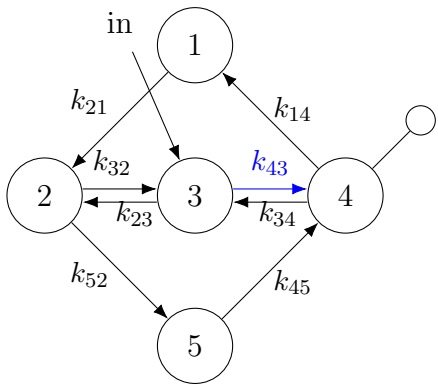
\begin{figure}[ht]
    \centering
    \begin{tikzpicture}
        \node[circle,draw, minimum size=1cm] (2) at  (0,0) {2}; 
        \node[circle,draw, minimum size=1cm] (3) at  (2,0)  {3};
        \node[circle,draw, minimum size=1cm] (4) at  (4,0)  {4};
        \node[circle,draw, minimum size=1cm] (5) at  (2,-2)  {5};
        \node[circle,draw, minimum size=1cm] (1) at  (2, 2)  {1};
        \draw (1) -- (2) [-{Latex[length=2mm]}] node [label={[xshift= .6cm, yshift= .8cm]{\small $k_{21}$}}]{};
        \draw (3) -- (2) [-{Latex[length=2mm]},transform canvas={yshift=-2.5pt}] node [label={[xshift= 1.2cm, yshift= -.6cm]{\small $k_{23}$}}]{};
        \draw (2) -- (3) [-{Latex[length=2mm]},transform canvas={yshift=2.5pt}] node [label={[xshift= -1.1cm, yshift= -.1cm]{\small $k_{32}$}}]{};
        \draw (3) -- (4) [-{Latex[length=2mm]},transform canvas={yshift=2.5pt}, color = blue] node [label={[xshift= -1cm, yshift= -.1cm, color = black]{\small {\color{blue} $k_{43}$} }}]{};
        \draw (4) -- (3) [-{Latex[length=2mm]},transform canvas={yshift=-2.5pt}] node [label={[xshift= 1.1cm, yshift= -.6cm]{\small $k_{34}$}}]{};
        \draw (4) -- (1) [-{Latex[length=2mm]}] node [label={[xshift= 1.3cm, yshift= -1.3cm]{\small $k_{14}$}}]{};
        \draw (5) -- (4) [-{Latex[length=2mm]}] node [color = black, label={[xshift= -.7cm, yshift= -1.6cm]{\small $k_{45}$}}]{};
        \draw (2) -- (5) [-{Latex[length=2mm]}] node [color = black, label={[xshift= -1.4cm, yshift= .5cm]{\small $k_{52}$}}]{};
        \node[circle, minimum size=0cm] (in) at  (1,2.3)  {in};
        \draw (in) -- (3) [-{Latex[length=2mm]}];
        \node[circle, draw, minimum size=.01cm] (out) at  (5,1)  {};
        \draw (out) -- (4);
    \end{tikzpicture}
    \caption{Model with $In = \{3\}$, $Out = \{4\}$, and dividing edge $k_{43}$ (in blue).}
    \label{fig:not_break_strong2}
\end{figure}

\begin{example} \label{ex:dividing-edge-use-thm}
Figure \ref{fig:not_break_strong2} depicts an identifiable, strongly connected, square-Jacobian linear compartmental model $\mathcal M$ with dividing edge $k_{43}$. The model has $8$ parameters, and the shortest path from input to output has length $1$ (the edge $k_{43}$). 
When the dividing edge $k_{43}$ is removed, the resulting model $\mathcal M '$ is also strongly connected, but the length of the shortest path from input to output has increased to $3$ (the edges of this path are $k_{23}, \, k_{52}, \, k_{45}$). 
Thus, by Theorem~\ref{thm:dividing-edge}, $\mathcal M' $ is unidentifiable.
\end{example}

The model in the following example is not covered by Theorem~\ref{thm:dividing-edge}.

\begin{example} \label{ex:mysterious1}
We revisit the linear compartmental model from Figure \ref{fig:Mysterious1} (in Section~\ref{sec: background}), which is strongly connected and square-Jacobian. The dividing edges are $k_{41}$, $k_{43}$, and
$k_{54}$. 
The edge $k_{41}$ is an interesting case of a dividing edge, in that the removal of $k_{41}$ results in a model $\mathcal M'$ that is still strongly connected 
and the length of the shortest path from input to output has not increased (so, Theorem~\ref{thm:dividing-edge} does not apply). 
Nevertheless, 
consistent with Conjecture~\ref{conj:dividing-edge}, 
this model $\mathcal M'$ is unidentifiable: there are $8$ coefficients and $7$ parameters, and all minors of the $8 \times 7$ Jacobian matrix are 0. 
As for the other dividing edges, $k_{43}$ and $k_{54}$, removing either one yields a model that is not strongly connected and so is not considered by Conjecture~\ref{conj:dividing-edge}. 
\end{example}

We end this subsection by returning to a model from Section~\ref{sec: background}.

\begin{example} \label{ex:appearing-1}
Recall that the linear compartmental model shown in Figure~\ref{fig:appearing1} is strongly connected
and square-Jacobian ($6 \times 6$), with 2 dividing edges: $k_{12}$ and $k_{23}$.  
Removing the edge $k_{23}$ yields a model that is no longer strongly connected, so Conjecture~\ref{conj:dividing-edge} does not apply.  
Removing the edge~$k_{12}$ yields a strongly connected model that is unidentifiable, which is 
consistent with Conjecture~\ref{conj:dividing-edge}.  
This unidentifiability is explained by Theorem~\ref{thm:dividing-edge}: the length of the shortest path from input to output increases from 1 to 3.
\end{example}
\subsection{Non-square Jacobian matrices} \label{sec:non-square}
We end this section 
by briefly considering the case when an identifiable model has a non-square Jacobian matrix and therefore the singular locus is defined by the set of all maximal minors of the Jacobian matrix. 
In this case, we propose to extend Conjecture~\ref{conj:dividing-edge} as follows:
If an edge divides every maximal minor of the Jacobian matrix, then removing this edge results in a model that is unidentifiable.

It is also natural to consider the possible, stronger conjecture that if an edge divides {\em at least one} maximal minor of the Jacobian matrix, then removing this edge makes the model unidentifiable. 
However, this conjecture is false, as the next example shows.  
Indeed, more work remains to be done in 
order to understand the information contained -- both collectively and individually --
in the minors of the Jacobian matrix.

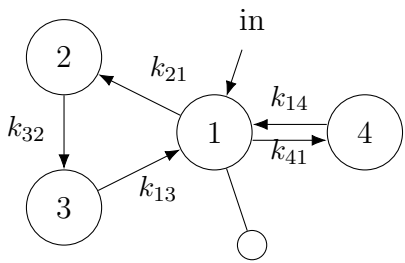
\begin{figure}[ht]
    \centering
    \begin{tikzpicture}
        \node[circle,draw, minimum size=1cm] (3) at  (0,-1)  {3};
        \node[circle,draw, minimum size=1cm] (2) at  (0,1)  {2};
        \node[circle,draw, minimum size=1cm] (1) at  (2,0)  {1};
        \node[circle,draw, minimum size=1cm] (4) at  (4,0)  {4};
        \draw (1) -- (2) [-{Latex[length=2mm]}] node [label={[xshift= 1.4cm, yshift= -.6cm]{\small $k_{21}$}}]{};
        \draw (2) -- (3) [-{Latex[length=2mm]}] node [label={[xshift= -.5cm, yshift= .6cm]{\small $k_{32}$}}]{};
        \draw (1) -- (4) [-{Latex[length=2mm]},transform canvas={yshift=-3pt}] node [label={[xshift= -1cm, yshift= -.6cm]{\small $k_{41}$}}]{};
        \draw (4) -- (1) [-{Latex[length=2mm]},transform canvas={yshift=3pt}] node [label={[xshift= 1cm, yshift= -.1cm]{\small $k_{14}$}}]{};
        \draw (3) -- (1) [-{Latex[length=2mm]}] node [label={[xshift= -.75cm, yshift= -1.2cm]{\small $k_{13}$}}]{};
        
        \node[circle, minimum size=0cm] (in) at  (2.5,1.5)  {in};
        \draw (in) -- (1) [-{Latex[length=2mm]}];
        \node[circle, draw, minimum size=.01cm] (out) at  (2.5,-1.5)  {};
        \draw (out) -- (1);
    \end{tikzpicture}
    \caption{Model with $In = Out= \{1\}$.}
    \label{fig:one_det_edge}
\end{figure}

\begin{example} \label{ex:divide-1-vs-all-minors}
Depicted in Figure~\ref{fig:one_det_edge} is a strongly connected linear compartmental model with $5$ parameters and $6$ coefficients, 
which results in a $6 \times 5$ Jacobian matrix of the coefficient map. 
 The 6 minors of this matrix are shown here in factored form:  
 \begin{align*}
M_1 \quad & = \quad     
    -(k_{13} k_{14}^2 - k_{14}^3 - k_{13} k_{14} k_{21} + k_{14}^2 k_{21} - k_{13} k_{14} k_{32} + k_{14}^2 k_{32} - k_{14} k_{21} k_{32} - k_{14}^2 k_{41} 
    \\
    & \quad \quad \quad \quad 
    + k_{13} k_{32} k_{41}) (k_{13} - k_{32})~,
    \\
%
%
   M_2 \quad & = \quad     
    -(k_{13}^2 k_{14}^2 - k_{13} k_{14}^3 - k_{13} k_{14}^2 k_{21} + k_{14}^3 k_{21} - 2 k_{13}^2 k_{14} k_{32} + 3 k_{13} k_{14}^2 k_{32} - k_{14}^3 k_{32} 
     \\
    & \quad \quad \quad \quad 
    - k_{14}^2 k_{21} k_{32} + k_{13}^2 k_{32}^2 - 2 k_{13} k_{14} k_{32}^2 + k_{14}^2 k_{32}^2 - k_{13} k_{14}^2 k_{41} + 2 k_{13} k_{14} k_{32} k_{41} 
        \\ \displaybreak[0]
    & \quad \quad \quad \quad 
    - k_{14}^2 k_{32} k_{41}) (k_{13} - k_{32})~,
    \\
   M_3 \quad & = \quad     
    -(k_{13}^3 k_{14}^2 - k_{13}^2 k_{14}^3 - k_{13}^2 k_{14}^2 k_{21} + k_{13} k_{14}^3 k_{21} - 2 k_{13}^3 k_{14} k_{32} + 3 k_{13}^2 k_{14}^2 k_{32} - k_{13} k_{14}^3 k_{32}
            \\
    & \quad \quad \quad \quad 
    + 2 k_{13}^2 k_{14} k_{21} k_{32} - 4 k_{13} k_{14}^2 k_{21} k_{32} + k_{14}^3 k_{21} k_{32} + k_{13}^3 k_{32}^2 - 3 k_{13}^2 k_{14} k_{32}^2 
        \\
    & \quad \quad \quad \quad 
    + 3 k_{13} k_{14}^2 k_{32}^2 - k_{14}^3 k_{32}^2 - k_{13}^2 k_{21} k_{32}^2 + 2 k_{13} k_{14} k_{21} k_{32}^2 - k_{14}^2 k_{21} k_{32}^2 + k_{13}^2 k_{32}^3 
        \\
    & \quad \quad \quad \quad 
    - 2 k_{13} k_{14} k_{32}^3 + k_{14}^2 k_{32}^3 - k_{13}^2 k_{14}^2 k_{41} + 2 k_{13}^2 k_{14} k_{32} k_{41} - k_{13} k_{14}^2 k_{32} k_{41} - k_{13}^2 k_{32}^2 k_{41} 
        \\
    & \quad \quad \quad \quad 
    + 2 k_{13} k_{14} k_{32}^2 k_{41} - k_{14}^2 k_{32}^2 k_{41}) (k_{13} - k_{32})~,
  \\ \displaybreak[0]
   M_4 \quad & = \quad     
-(k_{13} - k_{14}) (k_{13} - k_{32}) (k_{14} - k_{32}) k_{14}~,
  \\
   M_5 \quad & = \quad     
-(k_{13} k_{14} - k_{13} k_{32} + k_{14} k_{32}) (k_{13} - k_{14}) (k_{13} - k_{32}) (k_{14} - k_{32})~,
  \\
   M_6 \quad & = \quad     
-(k_{13}^2 k_{14} - k_{13}^2 k_{32} + k_{13} k_{14} k_{32} - k_{13} k_{32}^2 + k_{14} k_{32}^2) (k_{13} - k_{14}) (k_{13} - k_{32}) (k_{14} - k_{32})~.
  \end{align*}
Notice that no edge parameter $k_{ij}$ divides all $6$ of these minors. However, the edge $k_{14}$ divides one of the  $6$ minors, $M_4$.
When this edge is removed, the resulting model $\mathcal{M}'$ is no longer strongly connected, but by thinking of the edge $k_{41}$ as a ``pseudo-leak'', we can nevertheless view the model $\mathcal{M}'$ as a (strongly connected) cycle model on three nodes with input, output, and leak in a single compartment -- and so $\mathcal{M}'$ is identifiable~\cite{GNA17Three}.
\end{example}

\section{Discussion} \label{sec:discussion}
This article was motivated by the following questions about linear compartmental models:
\begin{question} \label{q:discussion}~
\begin{enumerate}
    \item (When) does adding or removing an edge or a leak preserve identifiability?
    \item (When) do edge or leak terms divide the singular-locus equation?
    \item How do the above two questions interact?
\end{enumerate}
\end{question}

Some conjectured (partial) answers are as follows. 
Removing a leak preserves identifiability (Conjecture~\ref{conj:rmv-leak}), 
removing a dividing edge never preserves identifiability (Conjecture~\ref{conj:dividing-edge}), 
and leak terms do not divide the singular-locus equation (Conjecture~\ref{conj:leaks-do-not-divide}).  We proved several subcases of these conjectures 
and proved (under some hypotheses) the equivalence of  Conjectures~\ref{conj:rmv-leak} and~\ref{conj:leaks-do-not-divide}.

Going forward, there remain 
many cases when a model is unidentifiable but nevertheless has at least as many coefficients as parameters.  This is an interesting future direction.
More generally, we hope that our results inspire more answers to Question~\ref{q:discussion}, which in turn will contribute to our ability to read important information about a model directly from its structure.  Indeed, it would be spectacular to be able to infer immediately from the underlying combinatorics of a model which edges are dividing edges or even whether or not the model is identifiable.

\subsection*{Acknowledgements}
Patrick Chan, Katherine Johnston, and Clare Spinner initiated this research in the 2020 REU in the Department of Mathematics at Texas A\&M
University, supported by NSF grant DMS-1757872, in which Anne Shiu and Aleksandra Sobieska were mentors.  
Anne Shiu was supported by NSF grant DMS-1752672, and acknowledges Cashous Bortner and Nicolette Meshkat for helpful discussions.

\bibliography{mybibliography}{}
\bibliographystyle{plain}

\end{document}